\newtheorem{theorem}{Theorem}[section]
\newtheorem{lemma}[theorem]{Lemma}
\newtheorem{corollary}[theorem]{Corollary}
\theoremstyle{definition}
\newtheorem{example}[theorem]{Example}
\theoremstyle{remark}
\numberwithin{equation}{section}
\newcommand{\MM}{\ensuremath{\mathcal{M}}}
\newcommand{\BB}{\ensuremath{\mathcal{B}}}
\newcommand{\BR}{\ensuremath{\mathtt{Bor}}}
\newcommand{\CC}{\ensuremath{\mathcal{C}}}
\newcommand{\LL}{\ensuremath{\mathcal{L}}}
\newcommand{\RR}{\ensuremath{\mathbb{R}}}
\newcommand{\NN}{\ensuremath{\mathbb{N}}}
\newcommand{\bk}[2]{\ensuremath{\left\langle #1,#2 \right\rangle}}
\begin{document}
\setcounter{page}{1}

\centerline{}

\centerline{}

\title[The e-property of asymptotically stable Markov semigroups]{The e-property of asymptotically stable Markov semigroups}

\author[Ryszard Kukulski, Hanna Wojew{\'o}dka-{\'S}ci\k{a}{\.z}ko]{Ryszard Kukulski$^1$ and Hanna Wojew{\'o}dka-{\'S}ci\k{a}{\.z}ko$^{1,2}$}

\address{$^{1}$ Institute of Theoretical and Applied Informatics, \newline Polish Academy
		of \hbox{Sciences},  \newline Ba{\l}tycka 5, 44-100 Gliwice, Poland}
\address{$^{2}$ Institute of Mathematics,  University of Silesia in Katowice, 
\newline Bankowa 14, 
		40-007 Katowice, Poland}
\email{\textcolor[rgb]{0.00,0.00,0.84}{ryszard.kukulski@gmail.com}}
\email{\textcolor[rgb]{0.00,0.00,0.84}{hanna.wojewodka@gmail.com}}


\subjclass[2020]{Primary 60J25; Secondary 37A30, 46N30, 46E27, 60B10.}

\keywords{Markov semigroup,  e-property, equicontinuity, asymptotic stability, stochastic continuity, bounded-Lipschitz distance}


\begin{abstract}
{The relations between asymptotic stability and the {e-property} of Markov semigroups,  
acting on measures defined on general (Polish) metric spaces, are studied.  
While usually much attention is paid to asymptotic stability (and the e-property has been for years verified only to establish it), it should be noted that the e-property itself is also important as it, e.g., ensures that numerical errors in simulations are negligible. 

Here, it is shown that any asymptotically stable Markov-Feller semigroup with an invariant measure such that the interior of its support is non-empty satisfies the eventual e-property.  
Moreover, we prove that any Markov-Feller semigroup, which is strongly stochastically continuous, and which possesses the eventual e-property, also has the e-property. We also present an example highlighting that strong stochastic continuity cannot be replaced by its weak counterpart, unless a~state space of a~process corresponding to a Markov semigroup is a compact metric space.}
\end{abstract} \maketitle

\section*{Introduction}

Markov operators and Markov semigroups acting on  
measures come naturally from Markov chains and continuous-time Markov processes, respectively. 
We focus here on those whose underlying state space is a general (Polish) metric space.

The research on existence and uniqueness of \emph{stationary distributions} of Markov processes, as well as convergence of the law of these processes to their unique stationaries that is independent of their initial distributions (\emph{asymptotic stability}) has been carried out for years all over the world. In order to prove the above mentioned (and other) ergodic properties, various techniques referring to \emph{equicontinuity} properties of families of Markov operators have been introduced (see, e.g., \cite{stettner,lasota_szarek_06, szarek_worm_11,worm_10,dawid_11,czapla_horbacz_14, wedrychowicz_18}, where ergodic properties of Markov chains are established, or  \cite{szarek_sleczka_urbanski_10, kapica_szarek_sleczka_11}, where asymptotic behavior of continuous-time Markov processes are studied). Initially, these type of methods have been developed for Markov processes evolving on compact metric spaces (cf. \cite{jamison}) or locally compact Hausdorff topological spaces (cf. \cite{meyn_tweedie_12}). Further, the so-called \emph{lower-bound technique} for equicontinuous families of Markov operators has been introduced to prove results for processes evolving on general (Polish) metric state spaces 
(see, e.g., \cite{lasota_szarek_06, szarek_13_lbt}; cf.  \cite{szarek_03,szarek_06}, where one the first
results concerning asymptotics of Markov operators evolving in Polish metric spaces are obtained). In the literature the concepts like the \emph{e-property} \cite{szarek_worm_11,  dawid_11, czapla_horbacz_14}, the \emph{eventual e-property} \cite{worm_10,czapla_18}, \emph{the Ces\'aro e-property} \cite{worm_10} or even \emph{uniform equicontinuity on balls}  \cite{hille_horbacz_szarek_16} have been considered. 
Recently, by utilizing a~\emph{Schur-like property} for measures, the authors of \cite{hille_new} have established a~rigorous connection between the concepts of equicontinuity for Markov operators acting on measures and their dual operators acting on functions.

Studying the e-property, the natural question arises: \emph{Does asymptotic stability immediately imply the e-property?}. The answer is negative and it is given in \cite{hille_szarek_ziem_17}. More precisely, the authors have provided a counterexample of a Markov-Feller operator which is asymptotically stable, but which does not possess the e-property. Simultaneously, they have proved that any asymptotically stable Markov-Feller operator with an invariant probability measure such that the interior of its support is non-empty satisfies the e-property. The result has been later  generalised and made tight in \cite{kukulski_wojewodka}.

One of the main theorems of this paper (Theorem \ref{thm:e-prop}) is an equivalent of \hbox{\cite[Theorem 2.3]{hille_szarek_ziem_17}} for semigroups of Markov operators, and so its proof is based on certain ideas derived from \cite{hille_szarek_ziem_17}. 
The difference is that here the Feller property, asymptotic stability, and non-emptiness of the interior of the support of the unique invariant probability measure are not sufficient to guarantee the e-property (counterexamples are provided in Section \ref{sec:ex}). Under these assumptions, we can only establish the \emph{eventual e-property}, that is, the e-property that holds from a~certain point in time, rather than over the entire time interval $t\in\mathbb{R}_+$ (cf. \cite{czapla_18}).   
To prove the e-property, an additional assumption of \emph{strong stochastic continuity} (defined as in \cite{ethier_kurtz}) has to be made. 
More precisely, in Theorem \ref{thm:eventual_vs_e},  we demonstrate that any Markov-Feller semigroup that is strongly stochastically continuous, and possesses the eventual e-property, also has the e-property. 
Interestingly, the assumption cannot be weakened. Example \ref{ex:3} in Section \ref{sec:ex} illustrates that stochastic continuity in its weaker form, that is, with pointwise convergence in the place of  convergence in the supremum norm, does not necessarily imply the desired assertion.

An immediate consequence of Theorems \ref{thm:e-prop} and \ref{thm:eventual_vs_e} is that an asymptotically stable Markov-Feller semigroup possesses the e-property if it is also strongly stochastically continuous and the interior of the support of its unique invariant probability measure is nonempty (Corollary \ref{thm_final}).

Asymptotic stability, especially if achieved with the exponential rate, is one of the most desired ergodic properties of Markov processes, but it is the e-property that, if additionally met, guarantees that certain numerical errors can be treated as negligible in simulations (cf. also \cite{czapla_horbacz_14}, where it is proven that an asymptotically stable Markov-Feller operator converges to its stationary uniformly, provided that it satisfies a form of equicontinuity condition). This  indicates why the theoretical result established here is important also from the point of view of applications.

It is also worth noting that stochastic continuity is not a highly restrictive requirement when working with Markov semigroups. It is necessary, for instance, to ensure joint measurability (as stated in \cite[Proposition 3.4.5]{worm_10}) or to uniquely characterize a Markov-Feller semigroup through its weak infinitesimal operators (see \cite[Theorem 2.5]{b:dynkin1}).

After having already published the initial version of this paper as a preprint and having submitted it to the journal, the article \cite{liu_liu_23} on a similar topic has appeared online, also as a preprint. The authors of that article present a~slightly more general result by assuming \emph{eventual continuity} of a given Markov semigroup instead of its asymptotic stability.  In contrast, here (assuming asymptotic stability) we guarantee the e-property of a  Markov semigroup in its strongest version (which we discuss in more detail in Section~\ref{sec:e-def} and \textnormal{\hyperref[appendix2]{Appendix II}}). 
In addition to this, we focus on providing justifications for the tightness of the results presented in this article. In particular, we give the negative answer to the fifth open question stated in \cite{liu_liu_23}. The authors of \cite{liu_liu_23} ask whether the equivalence between the e-property and the eventual e-property of Markov-Feller semigroups is still preserved if only stochastic continuity is assumed (in the place of strong stochastic continuity). 
As already mentioned above, Example \ref{ex:3} in Section \ref{sec:ex} proves that it is not true, unless the underlying state space is compact (cf. \textnormal{\hyperref[appendix]{Appendix I}}).

The paper is organized as follows. In Section \ref{sec:1} we collect the notation used throughout the article, as well as some definitions and facts (relating, among others, to Markov semigroups and their ergodic properties) that we refer to later in the paper. Section \ref{sec:main} is devoted to the presentation of the main results, and it is divided into three parts, in which we formulate theorems (Section \ref{sec:thms}), provide the above mentioned examples, pointing out that asymptotically stable Markov-Feller semigroups that are not strongly stochastically continuous do not necessarily enjoy the e-property (Section \ref{sec:ex}), and prove all the results (Section \ref{sec:proofs}). In \textnormal{\hyperref[appendix]{Appendix I}} we prove that whenever a~state space of a Markov semigroup is a compact metric space, the definitions of stochastic continuity and strong stochastic continuity of this semigroup are equivalent. Finally, in \textnormal{\hyperref[appendix2]{Appendix II}} we analyze different definitions
of the e-property, provide examples showing that these definitions are in general not equivalent and prove that under certain conditions (i.e. asymptotic stability and stochastic continuity) they can be used interchangeably.

\section{Preliminaries}\label{sec:1}
	Let $(S,\rho)$ be a Polish metric space, endowed with a Borel $\sigma$-algebra $\BR(S)$. We will write $B(x,r)$ for an open ball in $S$ centered at $x\in S$ and of radius $r>0$. The closure and the interior of a set $A \subset S$ shall be denoted by $\mathtt{Cl}(A)$ and $\mathtt{Int} (A)$, respectively, while for the boundry of $A$ we will write $\partial (A)$.

	Let $\BB_b(S)$ stands for the family of all real-valued, bounded and Borel measurable functions on $S$, equipped with the supremum norm $\|\cdot\|_{\infty}$, and let $\CC_b(S)$ and $\LL_b(S)$ denote the subfamilies of $\BB_b(S)$ consisting of all continuous and all Lipschitz continuous functions, respectively. 
\subsection{Spaces of Measures}
	Let us write $\MM(S)$ and $\MM_1(S)$ for the set of all finite Borel measures, defined on $(S,\BR(S))$, and its subset  consisting of all probability  measures, respectively.

For brevity, in what follows, we will write $\bk{f}{\mu}$ for the Lebesgue integral $\int_Sf(x)\,\mu(dx)$ of $f\in \BB_b(S)$ with respect to a signed Borel measure $\mu$ on $S$.

	We say that a sequence $\{\mu_n\}_{n \in \NN}\subset \mathcal{M}(S)$ 
	\emph{converges weakly} to $\mu \in \MM(S)$ as $n\to\infty$  
	($\text{w}\,\text{-}\lim_{n\to\infty}\mu_n = \mu$) if $\lim_{n \to 
	\infty}\bk{f}{\mu_n}= \bk{f}{\mu}$ for any $f\in \CC_b(S)$.  
	Citing the Portmanteau theorem (cf., e.g., \cite[Theorem 13.16]{klenke_13}), in the case of $\mu,\mu_1,\mu_2,\ldots\in\mathcal{M}_1(S)$, the weak convergence of $\{\mu_n\}_{n \in \NN}$ to $\mu$ is equivalent to each of the following conditions:
	\begin{itemize}
	\item $\lim_{n \to \infty}\bk{f}{\mu_n}= \bk{f}{\mu}$ for all $f\in \LL_b(S)$,
	\item $\limsup_{n\to\infty}\mu_n(F)\leq\mu(F)$ for all closed sets $F\subset S$,
	\item $\liminf_{n\to\infty}\mu_n(G)\geq\mu(G)$ for all open sets $G\subset S$,
	\item $\lim_{n\to\infty}\mu_n(A)=\mu(A)$ for any $A\in\BR(S)$ with $\mu(\partial (A))=0$.
	\end{itemize}

	The \emph{support} of any measure $\mu \in \MM(S)$ shall be defined, as usual, by
	\begin{align*}
	\mathrm{supp} \; \mu=\{x \in S: \mu(B(x,r))>0 \mbox{ for any } r>0\}.
	\end{align*}

\subsection{Markov Semigroups}

	An operator $P: \MM(S) \to \MM(S)$ is called a~\emph{Markov operator} if
		\begin{itemize}
			\item $( \lambda \mu_1 + \mu_2)P= \lambda \,\mu_1P+ \mu_2P$ 
			for any $\lambda \in\mathbb{R}_+$ and any $\mu_1, \mu_2 \in \MM(S)$,
			\item $ \mu P (S) = \mu(S)$  for every $\mu \in \MM(S)$.
		\end{itemize}
We say that a Markov operator $P$ is \emph{regular} if it possesses a \emph{dual operator}, that is, a~linear map (denoted by the same symbol) which acts on $\BB_b(S)$ and satisfies
		\begin{equation*}
		\bk{f}{\mu P}=\bk{Pf}{\mu} \quad \text{for any}\quad f\in 
		\BB_b(S)\quad\text{and any}\quad \mu\in\MM(S).
		\end{equation*}
	 In this work, we will focus on \emph{Markov-Feller operators}, that is, regular Markov operators whose duals leave the space $\CC_b(S)$ invariant. 
	 
	 It is worth noting that for a given \emph{transition probability function} 
	 \linebreak\hbox{$P:S\times \BR(S)\to[0,1]$}, that is, a~map for which 
	 $P(x,\cdot):\BR(S)\to[0,1]$ is a~probability measure for any fixed $x\in 
	 S$ and $P(\cdot,A):S\to[0,1]$ is a~Borel measurable function for any 
	 fixed $A\in\BR(S)$, one may define a~regular Markov operator $P:\MM(S)\to\MM(S)$, along 
	 with its dual operator $P:\BB_b(S)\to\BB_b(S)$, as follows:
	 \begin{align}\label{pi_P_U}
	 \begin{aligned}
	 &\mu P(A)=\left\langle P(\cdot,A),\mu\right\rangle
	 \quad\text{for every}\quad A\in\BR(S)\quad\text{and every}\quad\mu\in\MM(S),\\
	 &Pf(x)=\left\langle f,P(x,\cdot)\right\rangle
	 \quad\text{for every}\quad x\in S\quad \text{and every}\quad f\in\BB_b(S).
	 \end{aligned}
	 \end{align}
	 
A family $\{P(t)\}_{t\in\mathbb{R}_+}$ is 
called a \emph{regular Markov semigroup} if it consists of regular Markov 
operators \hbox{$P(t): \mathcal{M}(S) \to \mathcal{M}(S)$}, $t\in\mathbb{R}_+$, 
which form a~semigroup (under composition) with the identity transformation $P(0)$ 
as the unity element. 
A~semigroup $\{P(t)\}_{t\in\mathbb{R}_+}$ is said to be \emph{Markov-Feller} if 
$P(t)$ has the Feller property for all $t\in\mathbb{R}_+$. It is called 
\emph{stochastically continuous at zero}  
if $\lim_{t\to 0^+}P(t)f(x)=f(x)$ for all $x\in S$ and all $f\in \CC_b(S)$ (cf. \cite{worm_10, b:dynkin1, komorowski_walczuk}). Clearly, stochastic continuity at zero implies right-continuity at every $t_0\in\mathbb{R}_0$, 
but not left-continuity (see \cite[p. 93]{ziemlanska_21}).  
We will say that a semigroup $\{P(t)\}_{t\in\mathbb{R}_+}$ is 
\emph{stochastically continuous} (left and right) if  $\lim_{t\to t_0}P(t)f(x)=P(t_0)f(x)$ 
for all $x\in S$, all $t_0 \in\mathbb{R}_+$ and all $f\in \CC_b(S)$.  If, in 
turn, $\lim_{t\to 0^+}\|P(t)f-f\|_{\infty}=0$ for every $f\in \CC_b(S)$, we say, 
after  \cite{ethier_kurtz}, that $\{P(t)\}_{t\in\mathbb{R}_+}$ is \emph{strongly 
continuous} (or, more precisely, \emph{strongly stochastically continuous}). 
From the above definitions one can easily deduce that strong stochastic continuity implies stochastic continuity (left and right) (cf. \cite{kantorovitz}), which in turn, obviously, gives stochastic continuity at zero.  
A reader interested in a~precise distinction between these notions is referred 
to Example \ref{ex:3}, where we define a  stochastically continuous but not 
strongly stochastically continuous Markov semigroup, and to  
\textnormal{\hyperref[appendix]{Appendix I}}, where we demonstrate that in the case 
of a~compact metric state space all these notions coincide.

In the end, let us indicate that Markov semigroups are intimately connected with 
Markov processes. Given a Markov semigroup $\{P(t)\}_{t\in\mathbb{R}_+}$ and some measure 
$\mu\in\mathcal{M}_1(S)$, we can define a time-homogeneous \emph{Markov process} $\Phi$ with transition semigroup $\{P(t)\}_{t\in\mathbb{R}_+}$ and initial distribution $\mu$  as a family of $S$-valued random variables 
\hbox{$\{\Phi(t)\}_{t\in\mathbb{R}_+}$}, 
on some probability space $(\Omega,\mathcal{F},\mathbb{P}_{\mu})$,  satisfying 
\begin{align}\label{eq:M}
\begin{aligned}
&\mathbb{P}_{\mu}\left(\Phi(0)\in A\right)=\mu(A),\\
&\mathbb{P}_{\mu}\left(\Phi(s+t)\in A \,\lvert\, 
\mathcal{F}(s)\right)=\mathbb{P}_{\mu}\left(\Phi(s+t)\in A \,\lvert\, 
\Phi(s)\right)\quad\text{a.s.},\\
&\mathbb{P}_{\mu}\left(\Phi(s+t)\in A \,\lvert\, 
\Phi(s)=x\right)=P(t)(x,A)
\end{aligned}
\end{align}
for all $A \in\BR(S)$, all $s, t\in\mathbb{R}_+$ and any $x\in S$,
where $\{\mathcal{F}(s)\}_{s\in\mathbb{R}_+}$ is the natural filtration of $\Phi$.

\subsection{Ergodic Properties of Markov Semigroups}\label{sec:e-def}

Let us now recall the most important  notions concerning the ergodicity of Markov semigroups, including asymptotic stability, the eventual e-property and the e-property.

A Markov semigroup $\{P(t)\}_{t\in \mathbb{R}_+}$ is said to be 
\emph{asymptotically stable} if there exists a~unique probability measure \hbox{$\mu_*$} such that $ \mu_*P(t) = \mu_*$ for any \hbox{$t\in \mathbb{R}_+$} 
($\mu_*$ 
is then called an \emph{invariant measure} of $\{P(t)\}_{t\in\mathbb{R}_+}$) and 
\hbox{$\text{w}\,\text{-}\lim_{t\to\infty}\mu P(t) = \mu_*$} for each $\mu 
\in \MM_1(S)$. 	 

Let $R$ be a sufficiently large subset of the set $\CC_b(S)$.  
According to \cite{worm_10}, a~Markov semigroup 
$\{P(t)\}_{t\in\mathbb{R}_+}$ has the \emph{eventual e-property in $R$ at $z \in S$} if there exists $\tau\in\mathbb{R}_+$ such that for any $f \in R$
	 \begin{equation}\label{e-prop}
	 \lim_{x \to z}\sup_{t \geq\tau} \,\lvert P(t)f(x)-P(t)f(z)\rvert=0,
	 \end{equation}
	and it has the \emph{e-property in $R$ at $z$} as long as the above 
	holds with the supremum taken over the entire set $\mathbb{R}_+$. 
	 If \eqref{e-prop} holds for each $z \in S$, then we say that 
	 $\{P(t)\}_{t\in\mathbb{R}_+}$ has the \emph{(eventual) \hbox{e-property} in $R$}. A more general definition of the \emph{eventual 
	 e-property in $R$ at $z\in S$} has been introduced in 
	 \cite{czapla_18}, where the author, instead of \eqref{e-prop}, only requires 
	 the following:
\begin{equation*}
\forall_{f \in R} 
\;\forall_{\varepsilon>0}\;\exists_{\delta>0}\;\exists_{\tau\in\mathbb{R}_+}\;\forall_{x\in
 B(z,\delta)}\;\forall_{t\geq\tau}\;\;
\lvert P(t)f(x)-P(t)f(z) \rvert < \varepsilon.
\end{equation*} 
In what follows, we will adhere to the latter (more general) definition.

In the end, let us indicate that in many papers (such as  \cite{worm_10,hille_szarek_ziem_17,czapla_18, ziemlanska_21, liu_liu_23}, just to name a few), $R$ is assumed to be the set $\LL_b(S)$, although it can also be the set of bounded continuous functions with bounded (or compact) support (cf., e.g., \cite{stettner,meyn_tweedie_12, dawid_11}), or even the entire set $\mathrm{C}_b(S)$ (as in \cite{kukulski_wojewodka} or in this paper). Definitions of the e-property in different sets $R$ are generally non-equivalent. We elaborate more on this in \textnormal{\hyperref[appendix2]{Appendix II}}, where we provide some examples proving this statement. Moreover, we show that the notions of the e-property in some of the sets $R$ can be used interchangeably under the assumption that a~given Markov semigroup is asymptotically stable and stochastically continuous at zero. Similar observation is derived in the case of a single Markov operator in \cite[Remark 2.1 and Lemma 3.4]{kukulski_wojewodka} (although there only asymptotic stability is required).

\section{Main Results}\label{sec:main}

Within this section we will formulate and establish the main results of this article. The theorems are supported by examples that demonstrate the necessity of the undertaken assumptions.

\subsection{Theorems}\label{sec:thms}
The main results of this paper read as follows:
\begin{theorem}\label{thm:e-prop}
Let $\{P(t)\}_{t\in\RR_+}$ be an asymptotically stable Markov-Feller semigroup, and let $\mu_*$ be its unique invariant probability measure. 
If
\begin{align}\label{notempty}
\mathtt{Int}\left(\mathrm{supp}\left(\mu_*\right)\right)\neq\emptyset,
\end{align}
then $\{P(t)\}_{t\in\RR_+}$ has the eventual e-property in $\CC_b(S)$.
\end{theorem}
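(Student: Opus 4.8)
The plan is to argue by contradiction, adapting the scheme of \cite[Theorem 2.3]{hille_szarek_ziem_17} to continuous time and to the test class $\CC_b(S)$. Fix $z\in S$ and suppose the eventual e-property at $z$ fails for some $f\in\CC_b(S)$ with $\|f\|_{\infty}\le 1$. Negating the Czapla form of the definition (taking $\tau=n$) produces $\varepsilon>0$, a sequence $x_n\to z$, and times $t_n\to\infty$ such that
\begin{equation*}
\left|\bk{f}{\delta_{x_n}P(t_n)}-\bk{f}{\delta_{z}P(t_n)}\right|\ge\varepsilon\qquad\text{for all }n\in\NN.
\end{equation*}
By asymptotic stability, $\delta_{z}P(t_n)\to\mu_*$ weakly, so $\bk{f}{\delta_{z}P(t_n)}\to\bk{f}{\mu_*}$. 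Hence the whole argument reduces to the single claim that $\delta_{x_n}P(t_n)\to\mu_*$ weakly; once this is shown, $\bk{f}{\delta_{x_n}P(t_n)}\to\bk{f}{\mu_*}$ as well, the displayed gap cannot persist, and we reach a contradiction. As $z$ is arbitrary, this yields the eventual e-property in $\CC_b(S)$.

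This is exactly where hypothesis \eqref{notempty} must enter, since asymptotic stability alone does not force the moving-start orbits to settle (this is the content of the counterexample in \cite{hille_szarek_ziem_17}). Using \eqref{notempty}, I would fix a nonempty open ball $B_0=B(y_0,r)\subseteq\mathtt{Int}(\mathrm{supp}(\mu_*))$; since $y_0\in\mathrm{supp}(\mu_*)$, one has $c_0:=\mu_*(B_0)>0$. Asymptotic stability combined with the open-set part of the Portmanteau theorem gives $\liminf_{t\to\infty}\delta_{x}P(t)(B_0)\ge c_0$ for every $x\in S$, and, because the Feller property makes each $P(s)$ weakly continuous on $\MM_1(S)$ (as $\bk{g}{\mu P(s)}=\bk{P(s)g}{\mu}$ with $P(s)g\in\CC_b(S)$), this lower bound transfers to the moving starts $x_n$. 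Thus both orbits deposit a uniformly positive amount of mass in the fixed, genuinely mixing region $B_0$ at the large times $t_n$.

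To establish the claim I would first show that $\{\delta_{x_n}P(t_n)\}_{n\in\NN}$ is tight --- using the compactness of $\{x_n\}_{n}\cup\{z\}$, the Feller property, and the concentration just described --- and extract, via Prokhorov's theorem, a weakly convergent subsequence with limit $\nu\in\MM_1(S)$. The heart of the proof is then the identification $\nu=\mu_*$. Here I would employ a two-stage decomposition $\delta_{x_n}P(t_n)=\left(\delta_{x_n}P(t_n-u_n)\right)P(u_n)$ with $u_n\to\infty$ chosen so that by time $t_n-u_n$ at least $c_0$ units of mass already sit in $B_0$; the residual evolution $P(u_n)$ then drives this $B_0$-supported mass toward $\mu_*$, and a renewal-type iteration of this step forces the escaping fraction to vanish, so that $\nu=\mu_*$. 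Uniqueness of the invariant measure shows every weak sub-limit equals $\mu_*$, whence $\delta_{x_n}P(t_n)\to\mu_*$, completing the contradiction.

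I expect the identification $\nu=\mu_*$ to be the main obstacle. A naive attempt to prove directly that $\nu$ is invariant fails because of a time-shift circularity: weak continuity of $P(s)$ only yields $\nu P(s)=\lim_n\delta_{x_n}P(t_n+s)$, and there is no a priori reason for this shifted limit to coincide with $\nu$ unless invariance --- the very thing to be shown --- is already known. Breaking this circularity is precisely where the open mixing region $B_0$ furnished by \eqref{notempty} is indispensable, as it pins the recurrent mass to a set on which asymptotic stability can be applied after restarting. It is also here that the continuous-time, non-strongly-continuous setting forces the conclusion to be the \emph{eventual} e-property rather than the e-property on all of $\RR_+$, since no control is available for the transient small-$t$ behaviour; recovering the full e-property is deferred to Theorem \ref{thm:eventual_vs_e} under the additional hypothesis of strong stochastic continuity.
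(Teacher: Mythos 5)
Your reduction of the eventual e-property at $z$ to the claim that $\delta_{x_n}P(t_n)\to\mu_*$ weakly is sound (given asymptotic stability the two are in fact equivalent), and your overall strategy --- deposit a fixed fraction $\alpha$ of mass in a ball $B_0\subset\mathrm{supp}(\mu_*)$ and iterate so that the uncontrolled remainder decays geometrically --- is the same lower-bound scheme the paper uses. But there is a genuine gap exactly at the point you yourself flag as the main obstacle, and it is precisely the point where hypothesis \eqref{notempty} must do real work. As you use it, \eqref{notempty} only furnishes an open ball $B_0$ with $\mu_*(B_0)>0$; yet \emph{every} ball centred at a point of $\mathrm{supp}(\mu_*)$ has positive $\mu_*$-mass, whether or not the support has nonempty interior, so your argument as written would go through without \eqref{notempty} --- and the counterexample in \cite{hille_szarek_ziem_17} shows it cannot. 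The missing ingredient is the paper's Lemma \ref{lem1}: applying the Baire category theorem to the closure of a nonempty open subset $W$ of $\mathrm{supp}(\mu_*)$ (this is where \eqref{notempty} enters --- $W$ must be open in $S$, not merely relatively open in the support, for the resulting ball to control all nearby mass), one obtains $T_0\in\RR_+$ and an open ball $B\subset\mathrm{supp}(\mu_*)$ on which $\lvert P(t)f(x)-\bk{f}{\mu_*}\rvert\le\varepsilon$ holds \emph{uniformly} in $x\in B$ for all $t\ge T_0$. Without this uniform estimate, the mass your decomposition parks in $B_0$ consists of points $y$ for which $\delta_yP(u)\to\mu_*$ at rates that may degenerate as $y$ varies, and the ``renewal-type iteration'' you invoke to force $\nu=\mu_*$ cannot be closed. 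Likewise, your tightness claim for $\{\delta_{x_n}P(t_n)\}_n$ is asserted rather than proved and does not follow from compactness of $\{x_n\}\cup\{z\}$ alone, since asymptotic stability provides no uniformity in the starting point; nor does the Feller property ``transfer'' the bound $\liminf_{t\to\infty}\delta_{x}P(t)(B_0)\ge c_0$ to the $x_n$ uniformly in $t$.

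For comparison, the paper needs neither tightness nor a weak limit point $\nu$: once Lemma \ref{lem1} is available, the estimate $\sup_{t\ge\tau}\lvert P(t)f(x_i)-P(t)f(x_0)\rvert\le \varepsilon/2+2(1-\alpha)^k\|f\|_\infty$ follows directly from the decomposition $\delta_{x_0}P(t)=\alpha\nu_1^{x_0}P(t-t_1)+\dots+(1-\alpha)^k\mu_k^{x_0}P(t-t_1-\dots-t_k)$ together with its counterpart for $x_i$, where the times $t_1,\dots,t_k$ are chosen once, for the limit point $x_0$, and transferred to nearby $x_i$ via the Feller property and the Portmanteau theorem (the radii $r_l$ being adjusted by Lemma \ref{lem2} so that $\partial(B(z,r_l))$ is null for the relevant measures). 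If you supply Lemma \ref{lem1} and replace the Prokhorov step by this direct two-sided estimate, your outline becomes the paper's proof; as it stands, the identification $\nu=\mu_*$ remains unproved and the stated role of \eqref{notempty} is vacuous.
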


\begin{theorem}\label{thm:eventual_vs_e}
Let $\{P(t)\}_{t\in\RR_+}$ be a strongly stochastically continuous Markov-Feller semigroup. If $\{P(t)\}_{t\in\RR_+}$ has the eventual e-property in $\CC_b(S)$, then it has the \hbox{e-property} in $\CC_b(S)$ too.
\end{theorem}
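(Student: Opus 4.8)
The plan is to first upgrade the hypothesis of strong stochastic continuity into genuine continuity of the orbit map $t \mapsto P(t)f$ in the supremum norm, then dispatch the bounded time window $[0,\tau]$ by a finite covering argument that leans on the Feller property, and finally glue this to the eventual e-property, which already controls the tail $t \geq \tau$. Throughout I fix $z \in S$, $f \in \CC_b(S)$ and $\varepsilon>0$, and aim to produce a single radius $\delta>0$ that works for \emph{all} $t \in \mathbb{R}_+$ simultaneously.

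First I would record the routine fact that each dual operator $P(t)$ is a contraction on $(\BB_b(S),\|\cdot\|_\infty)$: since $P(t)$ sends finite nonnegative measures to finite nonnegative measures and preserves mass, testing the duality identity against Dirac masses shows that $P(t)$ is positive and fixes the constant function $\mathbf{1}$, whence $\|P(t)g\|_\infty \leq \|g\|_\infty$ for every $g \in \BB_b(S)$. Combining this with the dual semigroup identity $P(s+t)f = P(s)P(t)f$, I would then prove that $t \mapsto P(t)f$ is continuous from $\mathbb{R}_+$ into $(\CC_b(S),\|\cdot\|_\infty)$ (the Feller property guaranteeing $P(t)f \in \CC_b(S)$). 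For right-continuity at $t_0$ one writes $P(t)f - P(t_0)f = P(t_0)(P(t-t_0)f - f)$, so $\|P(t)f - P(t_0)f\|_\infty \leq \|P(t-t_0)f - f\|_\infty \to 0$ as $t \downarrow t_0$ by strong stochastic continuity applied to $f$; for left-continuity one instead writes $P(t_0)f - P(t)f = P(t)(P(t_0-t)f - f)$ and argues the same way as $t \uparrow t_0$.

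With this in hand, I invoke the eventual e-property at $z$ (in the more general sense adopted in the paper) to obtain $\tau \in \mathbb{R}_+$ and $\delta_1>0$ with $\lvert P(t)f(x) - P(t)f(z)\rvert < \varepsilon$ for all $x \in B(z,\delta_1)$ and all $t \geq \tau$. It remains to handle $t \in [0,\tau]$. Since $[0,\tau]$ is compact and $t \mapsto P(t)f$ is sup-norm continuous, it is uniformly continuous there, so I can choose finitely many instants $0 = t_1 < \cdots < t_n = \tau$ such that every $t \in [0,\tau]$ satisfies $\|P(t)f - P(t_i)f\|_\infty < \varepsilon/3$ for some $i$. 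For each fixed $i$ the Feller property makes $x \mapsto P(t_i)f(x)$ continuous, yielding $\delta_i>0$ with $\lvert P(t_i)f(x) - P(t_i)f(z)\rvert < \varepsilon/3$ on $B(z,\delta_i)$. Setting $\delta_2 = \min_{1\leq i \leq n}\delta_i$, a standard three-$\varepsilon$ estimate gives $\lvert P(t)f(x)-P(t)f(z)\rvert < \varepsilon$ for all $x \in B(z,\delta_2)$ and all $t \in [0,\tau]$. Finally $\delta = \min(\delta_1,\delta_2)$ works for every $t \in \mathbb{R}_+$, which is precisely the e-property in $\CC_b(S)$ at $z$; as $z$ was arbitrary, the theorem follows.

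The crux I anticipate is the first step: promoting strong stochastic continuity (which by itself is only right-continuity at the origin) to full sup-norm continuity of the entire orbit. This is exactly where the supremum norm in the hypothesis is indispensable, because the later covering argument requires $\|P(t)f - P(t_i)f\|_\infty$ to be small \emph{uniformly in $x$}; merely pointwise (weak) stochastic continuity would only bound $\lvert P(t)f(x) - P(t_i)f(x)\rvert$ for each fixed $x$, with no uniformity, and the finite cover would fail to transfer. This is the very phenomenon that Example \ref{ex:3} is designed to exhibit, and it clarifies why the assumption cannot be relaxed to weak stochastic continuity unless the state space is compact.
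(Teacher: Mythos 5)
Your proof is correct, and its overall architecture coincides with the paper's: split $\sup_{t\in\RR_+}$ into the tail $t\ge\tau$ (handled by the eventual e-property) and the window $[0,\tau]$ (handled by strong stochastic continuity plus the Feller property); the paper isolates the latter step as Lemma \ref{thm:e-prop_[0,T]}. Where you genuinely differ is in how that window is treated. The paper argues by contradiction: it extracts a monotone subsequence $t_k\to t_0^{\pm}$ and kills the offending terms using exactly the identities you use, namely $P(t)f-P(s)f=P(s\wedge t)\bigl(P(\lvert t-s\rvert)f-f\bigr)$ together with the contraction property of the dual operators. You instead turn these identities into a clean standalone statement --- the orbit map $t\mapsto P(t)f$ is continuous from $\RR_+$ into $(\CC_b(S),\|\cdot\|_\infty)$ --- and then run a direct, constructive covering argument: uniform continuity on the compact interval, a finite $\varepsilon/3$-net of times, the Feller property at each net point, and a three-$\varepsilon$ estimate producing one explicit $\delta$. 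The two routes consume the same hypotheses in the same places; yours buys a reusable intermediate fact (norm-continuity of the orbit) and an effective radius, while the paper's subsequence argument is marginally shorter because it never needs a uniform modulus of continuity. Your closing remark about why the supremum norm is indispensable, and why pointwise stochastic continuity would break the covering step, matches the role of Example \ref{ex:3} in the paper.
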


To establish Theorem \ref{thm:eventual_vs_e}, we employ the following lemma, which 
in itself is an interesting result.

\begin{lemma}\label{thm:e-prop_[0,T]}
Let $\{P(t)\}_{t\in\RR_+}$ be a strongly stochastically continuous Markov-Feller 
semigroup. Then, for every $T\in\mathbb{R}_+$, $\{P(t)\}_{t\in \RR_+}$ has the 
e-property in $\CC_b(S)$ on the time interval $[0, T]$, i.e.
\begin{align}
	 \lim_{x \to z} \sup_{t \in [0, T]} \,\lvert P(t)f(x)-P(t)f(z)\rvert = 0
	 \quad\text{for every}\quad z\in S\quad \text{and every}\quad f\in\CC_b(S).
\end{align}
\end{lemma}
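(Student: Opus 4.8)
The plan is to combine a compactness argument on the time interval $[0,T]$ with a uniform-in-$t$ modulus of continuity for the map $t\mapsto P(t)f$ in the supremum norm, the latter being the genuine content of the lemma. First I would record the elementary fact that each dual operator $P(t):\BB_b(S)\to\BB_b(S)$ is a contraction with respect to $\|\cdot\|_\infty$: since $P(t)$ is the dual of a Markov operator it is positive and satisfies $P(t)\1=\1$, so from $-\|g\|_\infty\1\le g\le\|g\|_\infty\1$ one gets $\|P(t)g\|_\infty\le\|g\|_\infty$ for every $g\in\BB_b(S)$ and every $t\in\RR_+$.

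The crucial step is to upgrade strong stochastic continuity at zero into a modulus of continuity that does not depend on the base time. For a fixed $f\in\CC_b(S)$ set $\omega(s):=\|P(s)f-f\|_\infty$; strong stochastic continuity says precisely that $\omega(s)\to 0$ as $s\to 0^+$. Using the semigroup law $P(t)=P(t_0)P(t-t_0)$ together with the contraction property, for any $t\ge t_0\ge 0$ I obtain
\begin{equation*}
\|P(t)f-P(t_0)f\|_\infty=\bigl\|P(t_0)\bigl(P(t-t_0)f-f\bigr)\bigr\|_\infty\le\omega(t-t_0),
\end{equation*}
so the increments of $t\mapsto P(t)f$ are governed by $\omega$ uniformly in the anchor point $t_0$. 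This is the key point; the remainder is a routine finite-cover argument.

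To conclude, I would fix $\varepsilon>0$ and choose $\delta>0$ with $\omega(s)<\varepsilon/3$ for $0\le s<\delta$. Partitioning $[0,T]$ by points $0=t_0<t_1<\dots<t_N=T$ of mesh smaller than $\delta$, every $t\in[0,T]$ lies in some $[t_i,t_{i+1}]$, whence $\|P(t)f-P(t_i)f\|_\infty\le\omega(t-t_i)<\varepsilon/3$ by the displayed estimate. The Feller property guarantees $P(t_i)f\in\CC_b(S)$ for each $i$, so there is $\eta_i>0$ with $\lvert P(t_i)f(x)-P(t_i)f(z)\rvert<\varepsilon/3$ whenever $\rho(x,z)<\eta_i$; put $\eta=\min_{0\le i\le N}\eta_i>0$ (a minimum over finitely many positive numbers). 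A triangle inequality through the nearest grid point $t_i$ then yields
\begin{equation*}
\lvert P(t)f(x)-P(t)f(z)\rvert\le 2\|P(t)f-P(t_i)f\|_\infty+\lvert P(t_i)f(x)-P(t_i)f(z)\rvert<\varepsilon
\end{equation*}
for all $t\in[0,T]$ whenever $\rho(x,z)<\eta$, and hence $\sup_{t\in[0,T]}\lvert P(t)f(x)-P(t)f(z)\rvert\to 0$ as $x\to z$.

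The main obstacle is the second paragraph: recognizing that the contractivity of the dual operators lets one propagate strong continuity at the origin into a modulus of continuity that is independent of the base time $t_0$. Without this uniformity, the mere Feller continuity of each individual $P(t)f$ would not control the supremum over the infinite family $\{P(t)f:t\in[0,T]\}$, and the finite-cover argument would break down. I would also note that only the right-hand modulus $\omega(s)=\|P(s)f-f\|_\infty$ is needed, so strong stochastic continuity \emph{at zero} already suffices for this particular lemma, the full left-and-right strong continuity being used elsewhere.
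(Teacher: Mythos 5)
Your proof is correct, and it reaches the conclusion by a somewhat different route than the paper. The paper argues by contradiction: it extracts a monotone subsequence $t_k\to t_0$ from a putative sequence of counterexamples and treats the cases $t_k\to t_0^+$ and $t_k\to t_0^-$ separately, with a different telescoping decomposition in each case (the second case uses exactly your contraction estimate $\lvert P(t_k)(f-P(t_0-t_k)f)(z)\rvert\le\lVert f-P(t_0-t_k)f\rVert_\infty$, while the first applies strong continuity to the function $P(t_0)f\in\CC_b(S)$). You instead make the uniform-in-$t_0$ modulus $\lVert P(t)f-P(t_0)f\rVert_\infty\le\omega(t-t_0)$ explicit once and for all, via positivity and $P(t)\1=\1$ of the dual operators, and then run a direct $\varepsilon/3$ finite-cover argument over $[0,T]$. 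This buys you two things: you avoid the case split entirely, since anchoring each $t$ at the grid point to its left means only the right-hand modulus $\omega(s)=\lVert P(s)f-f\rVert_\infty$, $s\to 0^+$, is ever invoked (and indeed the paper's definition of strong stochastic continuity is precisely this one-sided condition, so your closing remark is consistent with, rather than a weakening of, the hypothesis); and you obtain a quantitative, constructive bound rather than a proof by contradiction. The essential ingredients — contractivity of the duals on $\BB_b(S)$, strong stochastic continuity at zero, the Feller property at finitely many (respectively, one limiting) time(s), and compactness of $[0,T]$ — are the same in both arguments.
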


Finally, as an immediate consequence of Theorems \ref{thm:e-prop} and \ref{thm:eventual_vs_e}, we get the following statement:
\begin{corollary}\label{thm_final}
Let $\{P(t)\}_{t\in\RR_+}$ be a strongly stochastically continuous and asymptotically stable Markov-Feller semigroup, and let $\mu_*$ be its unique invariant probability measure satisfying \eqref{notempty}. 
Then $\{P(t)\}_{t\in\RR_+}$ has the e-property in $\CC_b(S)$.
\end{corollary}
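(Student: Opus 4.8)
The plan is to derive Corollary~\ref{thm_final} by composing the two principal theorems, since its hypotheses are exactly the union of the assumptions required by Theorem~\ref{thm:e-prop} and Theorem~\ref{thm:eventual_vs_e}. I would proceed by first extracting the eventual e-property from asymptotic stability, and then upgrading it to the full e-property by invoking strong stochastic continuity.

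First I would observe that $\{P(t)\}_{t\in\RR_+}$ is, by assumption, an asymptotically stable Markov-Feller semigroup; asymptotic stability itself furnishes the unique invariant probability measure $\mu_*$, and we additionally assume that $\mu_*$ satisfies \eqref{notempty}. These are precisely the hypotheses of Theorem~\ref{thm:e-prop}, so that theorem applies verbatim and yields that $\{P(t)\}_{t\in\RR_+}$ enjoys the eventual e-property in $\CC_b(S)$.

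Next I would feed this conclusion into Theorem~\ref{thm:eventual_vs_e}. The semigroup is strongly stochastically continuous and Markov-Feller by hypothesis, and by the previous step it possesses the eventual e-property in $\CC_b(S)$; these are exactly the assumptions of Theorem~\ref{thm:eventual_vs_e}. Applying it therefore gives the e-property in $\CC_b(S)$, which is the assertion of the corollary.

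The only point deserving attention is that the two theorems must refer to one and the same notion in order for them to be chainable: the output of Theorem~\ref{thm:e-prop} is the eventual e-property \emph{in the set $R=\CC_b(S)$} and with respect to the (more general) Czapla-style definition adopted in Section~\ref{sec:e-def}, and this is exactly the input demanded by Theorem~\ref{thm:eventual_vs_e}. Since the sets $R$ and the underlying definitions coincide, no reconciliation between different versions of the e-property is required, and the argument is immediate. Consequently there is no genuine obstacle at the level of the corollary itself; the substantive difficulty resides in the proofs of Theorems~\ref{thm:e-prop} and~\ref{thm:eventual_vs_e} (the latter resting on Lemma~\ref{thm:e-prop_[0,T]}), not in their combination.
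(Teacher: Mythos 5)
Your proposal is correct and matches the paper exactly: the paper presents Corollary~\ref{thm_final} as an immediate consequence obtained by applying Theorem~\ref{thm:e-prop} to get the eventual e-property in $\CC_b(S)$ and then Theorem~\ref{thm:eventual_vs_e} to upgrade it to the full e-property. Your added remark that both theorems use the same set $R=\CC_b(S)$ and the same (Czapla-style) definition, so no reconciliation is needed, is a sensible sanity check but does not change the argument.
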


\subsection{Examples}\label{sec:ex}
Before we proceed to the proofs of the above-stated results, let us first present two  examples. 

Example \ref{ex:1} demonstrates that condition \eqref{notempty}, imposed in 
\cite[Theorem 2.3]{hille_szarek_ziem_17}  on  an asymptotically stable 
Markov-Feller chain to show that is has the \hbox{e-property}, is insufficient for a similar result in the case of a Markov semigroup. More precisely, Example \ref{ex:1} illustrates that, 
without additional assumptions, beyond those stated in Theorem \ref{thm:e-prop}, it 
is not possible to establish the e-property. Instead, only the eventual e-property 
can be concluded.

Example \ref{ex:3}, in turn, highlights the necessity of requiring strong stochastic continuity, rather than just stochastic continuity, to guarantee that an asymptotically stable Markov-Feller semigroup 
with unique invariant probability measure satisfying \eqref{notempty} has the e-property.

\begin{example}\label{ex:1}
Let $S=\{0\}\cup\{1/n:\,n\in\NN\}$, and note that $S$, endowed with the Euclidean metric, is a Polish space. 
For every $n\in\NN$, every $t\in\mathbb{R}_+$ and any $A\in\BR(S)$ define
\begin{align}\label{ex:def:pi}
P(t)\left(1/n,A\right)=e^{-nt}\delta_{1/n}(A)+\left(1-e^{-nt}\right)\delta_1(A),
\end{align}
and let
\begin{align}\label{ex:def:pi(0)}
\begin{aligned}
P(t)(0,A)
&=\mathbbm{1}_{\{0\}}(t)
\delta_0(A)
+\left( 1-\mathbbm{1}_{\{0\}}(t) \right) \delta_1(A).
\end{aligned}
\end{align}

It is not hard to see that, for any  $t\in\RR_+$, $P(t):S\times \BR(A)\to[0,1]$, given by \eqref{ex:def:pi} and \eqref{ex:def:pi(0)}, is a transition probability function. Thus, we can define the corresponding Markov operator $P(t)$  as indicated in \eqref{pi_P_U}. Obviously,  $\{P(t)\}_{t\in\mathbb{R}_+}$ is then a~Markov semigroup. To establish the Feller property it suffices to observe that for all $t\in\RR_+$ and all $f\in\mathcal{C}_b(S)$
\begin{align*}
\begin{aligned}
\lim_{n\to\infty} P(t)f\left(1/n\right)
&=\lim_{n\to\infty}\left(e^{-nt}f\left(1/n\right)+\left(1-e^{-nt}\right)f\left(1\right)\right)\\
&=\mathbbm{1}_{\{0\}}(t)f(0)+\left( 1-\mathbbm{1}_{\{0\}}(t) \right)f(1)=P(t)f(0).
\end{aligned}
\end{align*}

Moreover, note that $\delta_1$ is the unique invariant probability measure of $\{P(t)\}_{t\in\mathbb{R}_+}$ such that $\mathtt{Int}( \mathrm{supp} (\delta_1))=\{1\}\neq\emptyset$. 
Since 
$\lim_{t\to\infty}e^{-nt}=0$ for all $n\in\NN$, and also 
$\lim_{t\to\infty}\mathbbm{1}_{\{0\}}(t)=0$, we obtain that 
$\{P(t)\}_{t\in\mathbb{R}_+}$ is asymptotically stable. Let us, however, show 
that it does not have the e-property in $\CC_b(S)$. Indeed, by definition, $\delta_0 P (t)= \delta_1$ for each $t> 0$. On the other hand, for any $n\in\NN$, we obtain
\begin{align*}
\delta_{1/n}P\left(1/n\right)=e^{-1}\delta_{1/n}+\left(1-e^{-1}\right)\delta_1,
\end{align*}
which implies that for any  $f \in \CC_b(S)$
\begin{align*}
\lim_{n\to\infty} \lvert P\left(1/n\right)f(1/n) - f(1) \rvert = e^{-1} \lvert f(0) - f(1) \rvert,
\end{align*}
and so $\{P(t)\}_{t\in\mathbb{R}_+}$ fails the e-property in $\CC_b(S)$ at 0. 
Due to Theorem \ref{thm:e-prop}, it however possesses the eventual e-property in $\CC_b(S)$.

As a~consequence, referring to Corollary \ref{thm_final}, one can deduce that  $\{P(t)\}_{t\in\mathbb{R}+}$ is not strongly stochastically continuous (in fact, one can even easily show that it is not stochastically continuous at zero).
\end{example}

\begin{example}\label{ex:3}
Let 
$S = \{0\}\cup\{n:\, n \in \NN\}\cup\{1/n:\,n\in\NN\}$, and note that, if endowed with the Euclidean metric, it is a Polish space. 
The idea is to define a Markov semigroup $\{P(t)\}_{t\in\mathbb{R}_+}$ so that it transforms measures in the following way: 
$$\delta_{\frac{1}{n}} \to \delta_n \to \delta_0 \to \delta_1 \to 
\delta_1\quad\text{for all}\quad n\in\NN, n \ge 2.$$ 
To achieve this, we introduce generators 
\begin{equation}
Q_n = \begin{pmatrix} 0 & 1 & 0 & 0 \\ 0 & -1 & n & 0 \\ 0 & 0 & -n & n \\ 0 & 0 
& 0 & -n
\end{pmatrix},\;\;\; n \ge 2,
\end{equation}
of embeddable stochastic matrices $\exp(Q_n t)$, $t\in\mathbb{R}_+$, that induce transition operators $P(t):S\times \BR(A)\to[0,1]$ in accordance with 
\begin{equation}\label{ex2_def}
\exp(Q_n t) = 
\begin{pmatrix} 
P(t)(1, 1) & P(t)(0, 1) & P(t)(n, 1) & P(t)(1/n, 
1) \\ 
P(t)(1, 0) & P(t)(0, 0) & P(t)(n, 0) & P(t)(1/n, 0) \\
P(t)(1, n) & P(t)(0, n) & P(t)(n, n) & P(t)(1/n, 
n) \\
P(t)(1, 1/n) & P(t)(0, 1/n) & P(t)(n, 1/n) & P(t)(1/n, 1/n)
\end{pmatrix},n \ge 2.
\end{equation}  
As a solution of \eqref{ex2_def}, and referring to \eqref{pi_P_U}, we see that for each $t\in\mathbb{R}_+$ a Markov operator $P(t):\mathcal{M}(S)\to\mathcal{M}(S)$ is determined by
\begin{align*}
&\delta_1P(t)=\delta_1,\qquad \delta_0P(t)=e^{-t}\delta_0+\left(1-e^{-t}\right)\delta_1,\\
&\delta_nP(t)=e^{-nt}\delta_n+\frac{n}{n-1}\left(e^{-t}-e^{-nt}\right) \delta_0
+a_{n,t}\delta_1,\\
&\delta_{1/n}P(t)=e^{-nt}\delta_{1/n}+nte^{-nt}\delta_n+\frac{n^2}{(n-1)^2}
\left(e^{-t}+e^{-nt}(t-nt-1)\right)\delta_0+b_{n,t}\delta_1
\end{align*}
for all $n\in\NN\backslash\{1\}$, where $a_{n,t},b_{n,t}\in\mathbb{R}$ are given by 
$(\delta_{n}P(t))(S)=1$ and $(\delta_{1/n}P(t))(S)=1$, respectively.

One can easily check that $\{P(t)\}_{t\in\mathbb{R}_+}$, defined as above, is indeed a Markov semigroup, and since $\text{w}\,\text{-}\lim_{n\to\infty}\delta_{1/n}P(t)=\delta_0P(t)$ for each $t\in\mathbb{R}_+$, it also possesses the Feller property. Moreover, it is not difficult to see that $\{P(t)\}_{t\in\mathbb{R}_+}$ is stochastically continuous (left and right) and asymptotically 
stable, with the unique invariant probability measure $\mu_*=\delta_1$ such that $\mathtt{Int} (
\mathrm{supp} (\mu_*) )\neq \emptyset$ (the assumptions of Theorem \ref{thm:e-prop} are thus satisfied). However, since for every $n\in\NN\backslash\{1\}$
$$\delta_{1/n}P(1/n) 
= e^{-1} \delta_{1/n} + e^{-1} \delta_n + 
\frac{n^2}{(n-1)^2}(e^{-1/n}+\frac1n e^{-1} -2e^{-1})\delta_0 + 
b_{n,1/n}\delta_1,$$ 
we obtain that for all $f \in \CC_b(S)$
$$\limsup_{n\to\infty} \lvert P(1/n)f(1/n) - P(1/n)f(0) \rvert = \limsup_{n\to\infty} e^{-1} \lvert f(n) - f(0) \rvert .$$
Hence, $\{P(t)\}_{t\in\mathbb{R}_+}$ does not have the e-property in $\CC_b(S)$, and thus, in view of Corollary \ref{thm_final}, it is not strongly stochastically continuous.
\end{example}

\subsection{Proofs}\label{sec:proofs}
Within this section we are going to present the proofs of the main results of this paper, that is, Theorems \ref{thm:e-prop},  \ref{thm:eventual_vs_e} and Lemma \ref{thm:e-prop_[0,T]}.  

Let us begin by formulating and proving two lemmas that will help us establish Theorem \ref{thm:e-prop}. The proofs of both these lemmas, as well as the proof of Theorem \ref{thm:e-prop} itself, draw on certain ideas employed in \cite{hille_szarek_ziem_17}.

\begin{lemma}\label{lem1}
Let $\{P(t)\}_{t\in\RR_+}$ be an asymptotically stable Markov-Feller semigroup, and let $\mu_*\in\mathcal{M}_1(S)$ be its unique invariant probability measure. 
If condition \eqref{notempty} holds, then  
for every $f\in\mathcal{C}_b(S)$ and any $\varepsilon>0$ there exist $T_0\in\mathbb{R}_+$ and an open ball $B\subset \mathrm{supp}\left(\mu_*\right)$ such that
\begin{align}\label{eq:lem1}
\left\lvert P(t)f(x)-\left\langle f,\mu_*\right\rangle\right\rvert\leq \varepsilon\quad \text{for all}\quad x\in B\quad \text{and all}\quad t\geq T_0.
\end{align}
\end{lemma}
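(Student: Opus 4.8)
## Proof Proposal for Lemma 1.11

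The plan is to leverage asymptotic stability together with the non-emptiness of $\mathtt{Int}(\mathrm{supp}(\mu_*))$ to produce a single open ball on which $P(t)f(x)$ is uniformly (in $x$ and in large $t$) close to the target value $\langle f,\mu_*\rangle$. First I would fix $f\in\CC_b(S)$ and $\varepsilon>0$, and choose a point $x_0\in\mathtt{Int}(\mathrm{supp}(\mu_*))$ guaranteed by \eqref{notempty}, so that some closed ball around $x_0$ sits inside $\mathrm{supp}(\mu_*)$. The key quantity to control is the difference $P(t)f(x)-\langle f,\mu_*\rangle = \langle f, \delta_x P(t)\rangle - \langle f,\mu_*\rangle$, and the natural tool is the weak convergence $\text{w}\,\text{-}\lim_{t\to\infty}\delta_x P(t)=\mu_*$, which holds pointwise in $x$ by asymptotic stability.

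The main difficulty is that asymptotic stability only gives convergence of $P(t)f(x)$ for each \emph{fixed} $x$, whereas \eqref{eq:lem1} demands a bound that is uniform over a whole ball $B$ and over all $t\geq T_0$ simultaneously. To bridge this gap I would exploit the Feller property: for each fixed $t$, the map $x\mapsto P(t)f(x)$ is continuous, so $\langle f,\mu_*\rangle$ being attained in the limit at the interior point $x_0$ should, via a covering/compactness argument combined with the invariance $\mu_* P(t)=\mu_*$, propagate to a neighborhood. Concretely, I expect to write $\langle f,\mu_*\rangle = \langle P(t)f,\mu_*\rangle$ using invariance, and then argue that since $\mu_*$ charges every open subset of its support, the values $P(t)f(y)$ for $y$ ranging over the ball cannot stay far from their $\mu_*$-average without contradicting the convergence at the center together with continuity. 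This is precisely where the idea from \cite[Theorem 2.3]{hille_szarek_ziem_17} enters: one uses that an open ball $B\subset\mathrm{supp}(\mu_*)$ has positive $\mu_*$-measure to transfer a single-point limit into a set estimate.

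A clean way to organize the argument is as follows. By asymptotic stability, $\lim_{t\to\infty}P(t)f(x_0)=\langle f,\mu_*\rangle$, so pick $T_0$ with $|P(t)f(x_0)-\langle f,\mu_*\rangle|\leq\varepsilon/2$ for $t\geq T_0$. The remaining task is to choose the radius of $B=B(x_0,\delta)$ small enough that $|P(t)f(x)-P(t)f(x_0)|\leq\varepsilon/2$ for all $x\in B$ and all $t\geq T_0$ at once. Obtaining this \emph{uniform-in-$t$} equicontinuity at $x_0$ is the crux, and I anticipate it must come either from the eventual e-property being established separately (but that would be circular here), or — more likely in the authors' scheme — from an auxiliary lower-bound-type estimate showing that the family $\{P(t)f\}_{t\geq T_0}$ is equicontinuous at interior points of $\mathrm{supp}(\mu_*)$ because all these functions are being squeezed toward the constant $\langle f,\mu_*\rangle$. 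I therefore expect the real content of the lemma to be a quantitative statement: for $x$ near $x_0$, the measure $\delta_x P(t)$ and $\delta_{x_0}P(t)=$ (asymptotically) $\mu_*$ become close in a way controlled uniformly in $t$, which one would derive from the Feller continuity of $\delta_x\mapsto\delta_x P(t)$ combined with a second companion lemma (the unstated Lemma to follow) providing the missing uniformity. The hardest step, then, is securing uniformity in $t$; everything else is a routine $\varepsilon/2+\varepsilon/2$ combination.
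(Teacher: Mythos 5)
Your proposal correctly isolates the difficulty --- asymptotic stability gives only pointwise-in-$x$ convergence of $P(t)f(x)$ to $\langle f,\mu_*\rangle$, while the lemma demands a bound uniform over a ball and over all large $t$ --- but it does not supply the idea that closes this gap, and the route you sketch cannot be completed as stated. You propose to fix a center $x_0\in\mathtt{Int}(\mathrm{supp}(\mu_*))$ in advance, choose $T_0$ from the convergence at $x_0$, and then shrink the radius so that $\lvert P(t)f(x)-P(t)f(x_0)\rvert\le\varepsilon/2$ uniformly in $t\ge T_0$. That last step is exactly an equicontinuity statement for the family $\{P(t)f\}_{t\ge T_0}$ at the prescribed point $x_0$, which, as you yourself note, is essentially the (eventual) e-property and hence circular; the Feller property only gives continuity of $x\mapsto P(t)f(x)$ for each fixed $t$, with no control on how the modulus of continuity degrades as $t$ varies. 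There is no ``companion lemma'' in the paper supplying this uniformity, and invariance $\mu_*P(t)=\mu_*$ plays no role here.

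The missing idea is a Baire category argument in which the location of the ball is \emph{not} chosen in advance. Take a nonempty open $W\subset\mathrm{supp}(\mu_*)$, set $Y:=\mathtt{Cl}(W)$ (a Polish, hence Baire, space), and for $t\in\RR_+$ define
\begin{equation*}
Y_t:=\bigl\{x\in Y:\ \lvert P(s)f(x)-\langle f,\mu_*\rangle\rvert\le\varepsilon\ \text{for all}\ s\ge t\bigr\}.
\end{equation*}
Each $Y_t$ is closed in $Y$ because it is an intersection over $s\ge t$ of preimages of a closed interval under the continuous functions $P(s)f$ (this is where Feller is used). Asymptotic stability gives $Y=\bigcup_{n}Y_{t_n}$ for any sequence $t_n\uparrow\infty$, so by the Baire category theorem some $Y_{t_N}$ has nonempty interior; since $Y_{t_N}\subset\mathrm{supp}(\mu_*)$ (the support being closed), this interior contains an open ball $B\subset\mathrm{supp}(\mu_*)$ on which the estimate holds with $T_0:=t_N$. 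The price of this argument is that one has no control over where $B$ lies inside $\mathrm{supp}(\mu_*)$ --- but the lemma does not require any, and the subsequent proof of Theorem \ref{thm:e-prop} only needs that such a ball exists and has positive $\mu_*$-measure.
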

\begin{proof}
Due to \eqref{notempty}, we know that there exists an open set $W\subset\mathrm{supp}\left(\mu_*\right)$ which is nonempty, and therefore we can define $Y:=\mathtt{Cl}(W)\subset S$. Note that as a~subspace of a Polish space, $Y$ is Polish too, and thus it is a~Baire space, which fact shall be used later in the proof.

Let $f\in\mathcal{C}_b(S)$, and let $\varepsilon>0$. For any $t\in\mathbb{R}_+$ define
\begin{align*}
Y_t:=\left\{x\in Y:\;\;\left\lvert P(s)f(x)-\left\langle 
f,\mu_*\right\rangle\right\rvert\leq \varepsilon\;\;\text{for all
}\;\; s\geq t\right\}.
\end{align*}
Since $\{P(t)\}_{t\in\mathbb{R}_+}$ is assumed to be Feller, that is, $P(s)f$ is continuous for any $s\in\mathbb{R}_+$, we see that the sets $Y_t$, $t\in\mathbb{R}_+$, are closed in $Y$ (as intersections of closed sets).

Let us now take a sequence $\{t_n\}_{n\in\mathbb{N}}$ of real numbers such that $0\leq t_1<t_2<\ldots$ and $\lim_{n\to\infty}t_n=\infty$. Referring to the fact that $\{P(t)\}_{t\in\mathbb{R}_+}$ is, by assumption, asymptotically stable, we obtain that 
$Y=\bigcup_{n\in\mathbb{N}}Y_{t_n}$. 
Note that, by definition, $\mathtt{Int}(Y)\neq\emptyset$, whence there exists $N\in\mathbb{N}$ such that 
\begin{align}
\label{eq:Int Y_t_N}
\mathtt{Int}(Y_{t_N})\neq\emptyset
\end{align}
(otherwise all the sets $Y_{t_n}$ would be nowhere dense, and so, by the Baire theorem, $Y=\bigcup_{n\in\mathbb{N}}Y_{t_n}$ would be a~boundary set, which would obviously contradict the nonemptiness of $W$). 

Finally, we observe that $\mathrm{supp}\left(\mu_*\right)$ is closed in $S$, whence $Y_{t_N}\subset \mathrm{supp}\left(\mu_*\right)$. Condition \eqref{eq:Int Y_t_N} then implies the existence of an open set $B\subset Y_{t_N}\subset \mathrm{supp}\left(\mu_*\right)$. Letting $T_0:=t_N$, we obtain the assertion of the lemma.
\end{proof}

\begin{lemma}\label{lem2}
Let $\mu\in\mathcal{M}_1(S)$ and $A\in\mathtt{Bor}(S)$. If $I$ is an infinite set 
of indexes, and $\{A_i\}_{i\in I}\subset\mathtt{Bor}(S)$ is a family 
of pairwise disjoint sets satisfying $A=\bigcup_{i\in I}A_i$, then there exists a 
countable subset $J$ of $I$ such that $\mu(A_i)=0$ for all $i\in I\backslash J$.
\end{lemma}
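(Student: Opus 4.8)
The plan is to exploit the finiteness of $\mu$ together with the pairwise disjointness of the family $\{A_i\}_{i\in I}$ by means of a pigeonhole argument. The guiding principle is classical: in a disjoint family of Borel sets, only countably many members can carry positive mass of a finite measure, because the masses of any disjoint subfamily sum to at most the total mass.

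First I would, for each $n\in\NN$, introduce the index set
\begin{align*}
J_n:=\left\{i\in I:\;\mu(A_i)>\tfrac{1}{n}\right\}.
\end{align*}
The key step is to verify that each $J_n$ is finite. Suppose, towards a contradiction, that some $J_n$ contained $n$ pairwise distinct indices $i_1,\ldots,i_n$. Since the corresponding sets $A_{i_1},\ldots,A_{i_n}$ are pairwise disjoint Borel subsets of $A$, additivity and monotonicity of $\mu$ would yield
\begin{align*}
1=\mu(S)\geq\mu(A)\geq\mu\Big(\bigcup_{k=1}^{n}A_{i_k}\Big)=\sum_{k=1}^{n}\mu(A_{i_k})>n\cdot\tfrac{1}{n}=1,
\end{align*}
which is impossible. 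Hence $J_n$ has fewer than $n$ elements, so it is finite for every $n\in\NN$.

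Finally, I would set $J:=\bigcup_{n\in\NN}J_n$. Because $\mu(A_i)>0$ holds if and only if $\mu(A_i)>1/n$ for some $n\in\NN$, we have the identity $J=\{i\in I:\mu(A_i)>0\}$, and $J$, being a countable union of finite sets, is itself countable. Consequently $\mu(A_i)=0$ for every $i\in I\setminus J$, which is exactly the assertion of the lemma.

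The argument is essentially routine, and no real obstacle arises; the only point requiring care is the finiteness of each $J_n$, which rests solely on $\mu$ being a probability (in particular finite) measure together with the disjointness of the $A_i$. It is worth noting that neither the countability of $I$ nor the full strength of the hypothesis $A=\bigcup_{i\in I}A_i$ is needed: one only uses that the chosen finite subfamilies are disjoint subsets of $S$, so that their measures sum to at most $\mu(S)=1$.
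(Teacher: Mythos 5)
Your proof is correct and follows essentially the same route as the paper's: both stratify the indices with $\mu(A_i)>0$ according to the size of $\mu(A_i)$ (the paper via the intervals $(1/(n+1),1/n]$, you via the thresholds $1/n$), use finiteness of $\mu$ together with disjointness to show each stratum is finite, and conclude that the set of positive-mass indices is a countable union of finite sets. No issues.
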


\begin{proof}
Define $J:=\{i\in I:\; \mu(A_i)>0\}$, and observe that for all $i\in J$ there 
exists $n\in\mathbb{N}$ for which $\mu(A_i)\in (1/(n+1),1/n]$. Since 
$\mu(A)<\infty$, for any 
$n\in\mathbb{N}$ only a~finite number of indexes $i\in J$ can satisfy 
$\mu(A_i)\in (1/(n+1),1/n]$.  As a~consequence, 
$J$ can be at most a~countable set. 
\end{proof}

The proof of Theorem \ref{thm:e-prop} then proceeds as follows:
\begin{proof}[Proof of Theorem \ref{thm:e-prop}] 
Fix $f\in\mathcal{C}_b(S)$ and $\varepsilon>0$  arbitrarily. Moreover, let $x_0\in S$, and let $\{x_i\}_{i\in\mathbb{N}}\subset S$ be an arbitrary sequence converging to  $x_0$.

By Lemma \ref{lem1}, there exist $T_0\in\mathbb{R}$ and an open ball $B\subset \textrm{supp}\left(\mu_*\right)$, say $B=B(z,r)$ for some $z\in \textrm{supp}\left(\mu_*\right)$ and some $r>0$, for which  \eqref{eq:lem1} holds with $\varepsilon/4$ (in the place of $\varepsilon$). Further, observe that $\gamma:=\mu_*(B(z,r))>0$, and choose $\alpha\in(0,\gamma)$. 
Using the fact that $\{P(t)\}_{t\in\mathbb{R}_+}$ is asymptotically stable, and referring to the Portmanteau theorem, we get
\begin{align}\label{eq:>alpha}
\liminf_{t\to\infty}\mu P(t)\left(B(z,r)\right)\geq \mu_*\left(B(z,r)\right)=\gamma>\alpha\quad \text{for any}\quad \mu\in\mathcal{M}_1(S).
\end{align}

Now, fix $k\in\mathbb{N}$ so that  
\begin{align}\label{eq:k}
4(1-\alpha)^k\|f\|_{\infty}<\varepsilon,
\end{align}
and let us define the sequences $\{t_l\}_{l=1}^k\subset\mathbb{R}_+$ 
and $\{\nu_l^{x_0}\}_{l=1}^k,\{\mu_l^{x_0}\}_{l=1}^k\subset\mathcal{M}_1(S)$ 
as follows:
\begin{enumerate}
\item Fix $t_1\in\mathbb{R}_+$ such that 
$$\delta_{x_0}P(t_1)(B(z,r))>\alpha,$$ 
the existence of which follows from  \eqref{eq:>alpha}, and note that 
\begin{align*}
\delta_{x_0}P\left(t_1\right)(B(z,r))=\lim_{n\to\infty}\delta_{x_0}P\left(t_1\right)\left(B\left(z,r-1/n\right)\right), 
\end{align*}
whence we can choose $n_*\in\mathbb{N}$ for which 
\begin{align*}
\delta_{x_0}P(t_1)(B(z,r-1/n_*))>\alpha.
\end{align*}
Further, Lemma \ref{lem2} implies that there is  $r_1\in[r-1/n_*,r)$ satisfying
\begin{align}\label{eq:=0}
\delta_{x_0}P\left(t_1\right)\left(\partial \left(B\left(z,r_1\right)\right)\right)=0.
\end{align}
In view of the above, we can define
\begin{align}\label{def:nu1}
\nu_1^{x_0}(\cdot):=
\frac{\delta_{x_0}P\left(t_1\right)\left(\cdot\cap B\left(z,r_1\right)\right)}{\delta_{x_0}P\left(t_1\right)\left(B\left(z,r_1\right)\right)}
\end{align}
and 
\begin{align}\label{def:mu1}
\mu_1^{x_0}:=\frac{1}{1-\alpha}\left(
\delta_{x_0}P\left(t_1\right)-\alpha\nu_1^{x_0}\right),
\end{align}
which are both {Borel probability measures on $S$}. 
Let us also note that, by the Feller property of $\{P(t)\}_{t\in\mathbb{R}_+}$, we have
\begin{align}\label{eq:Feller_conseq}
\text{w}\,\text{-}\lim_{i\to\infty}\delta_{x_i}P\left(t_1\right)=\delta_{x_0}P\left(t_1\right).
\end{align}
\item Let $l\in\{1,\ldots,k-1\}$. Suppose that the Borel probability measures $\nu_m^{x_0}$ and $\mu_m^{x_0}$ are all well-defined for $m\leq l$. Then there exists $t_{l+1}\in\mathbb{R}_+$ such that 
\hbox{$\mu_l^{x_0}P\left(t_{l+1}\right)(B(z,r))>\alpha$}. Using arguments similar as in Step 1, we can further conclude that there is $r_{l+1}<r$ such that
\begin{align*}
\mu_l^{x_0}P\left(t_{l+1}\right)\left(B\left(z,r_{l+1}\right)\right)>\alpha\quad\text{and}\quad
\mu_l^{x_0}P\left(t_{l+1}\right)\left(\partial \left(B\left(z,r_{l+1}\right)\right)\right)=0.
\end{align*}
This, in turn, allows us to define
\begin{align}\label{def:nul}
\nu_{l+1}^{x_0}(\cdot):=\frac{\mu_{l}^{x_0}P\left(t_{l+1}\right)\left(\cdot\cap B\left(z,r_{l+1}\right)\right)}{\mu_{l}^{x_0}P\left(t_{l+1}\right)\left(B\left(z,r_{l+1}\right)\right)}
\end{align}
and
\begin{align}\label{def:mul}
\mu_{l+1}^{x_0}:=\frac{1}{1-\alpha}\left(\mu_l^{x_0}P\left(t_{l+1}\right)-\alpha\nu_{l+1}^{x_0}\right),
\end{align}
both belonging to $\mathcal{M}_1(S)$. 
\item 
Proceeding as in Step 2, we end up with the  inductively defined sequences $\{t_l\}_{l=1}^k\subset\mathbb{R}_+$ 
and $\{\nu_l^{x_0}\}_{l=1}^k,\{\mu_l^{x_0}\}_{l=1}^k\subset\mathcal{M}_1(S)$. 
Let us indicate that for any $t\geq t_1+\ldots+t_k$ we get
\begin{align}\label{sum_for_x0}
\begin{aligned}
\delta_{x_0}P(t)
=&\delta_{x_0}P\left(t_1\right)P\left(t-t_1\right)\\
=&\alpha \nu_1^{x_0}P\left(t-t_1\right)+(1-\alpha)\mu_1^{x_0}P\left(t-t_1\right)=\ldots\\
=&\alpha \nu_1^{x_0}P\left(t-t_1\right)+\ldots\\
&+\alpha(1-\alpha)^{k-1}\nu_k^{x_0}P\left(t-t_1-\ldots-t_k\right)\\
&+(1-\alpha)^k\mu_k^{x_0}P\left(t-t_1-\ldots-t_k\right).
\end{aligned}
\end{align}
\end{enumerate}

Combining observation \eqref{eq:Feller_conseq} and the Portmanteau theorem, we see that there exists $i_1\in\mathbb{N}$ such that
\begin{align*}
\delta_{x_i}P\left(t_1\right)\left(B\left(z,r_1\right)\right)>\alpha\quad\text{for all}\quad i\geq i_1,
\end{align*}
and thus, for any $i\geq i_1$, we can define $\nu_1^{x_i},\mu_1^{x_i}\in\mathcal{M}_1(S)$, exactly as in \eqref{def:nu1} and \eqref{def:mu1}, respectively, but with $x_i$ in the place of $x_0$.  

Using \eqref{eq:Feller_conseq} and the Portmanteau theorem again, we observe that, for every open subset $O$ of $S$,
\begin{align*}
\liminf_{i\to\infty}\delta_{x_i}P\left(t_1\right)\left(O\cap B\left(z,r_1\right)\right)
\geq \delta_{x_0}P\left(t_1\right)\left(O\cap B\left(z,r_1\right)\right).
\end{align*}
This, in turn, implies that
\begin{align*}
\text{w}\,\text{-}\lim_{i\to\infty}\delta_{x_i}P\left(t_1\right)\left(\cdot\cap B\left(z,r_1\right)\right)=\delta_{x_0}P\left(t_1\right)\left(\cdot\cap B\left(z,r_1\right)\right),
\end{align*}
and, as a consequence, $\text{w}\,\text{-}\lim_{i\to\infty}\nu_1^{x_i}=\nu_1^{x_0}$ and $\text{w}\,\text{-}\lim_{i\to\infty}\mu_1^{x_i}=\mu_1^{x_0}$.

For any sufficiently large $i$, we can define $\{\nu_l^{x_i}\}_{l=1}^k, \{\mu_l^{x_i}\}_{l=1}^k\subset\mathcal{M}_1(S)$ inductively on $l$ as follows. Let $l\in\{1,\ldots,k-1\}$, and suppose that there is $i_l\in\mathbb{N}$ such that, for any $i\geq i_l$, the measures $\nu_1^{x_i},\ldots,\nu_l^{x_i}$ and $\mu_1^{x_i},\ldots,\mu_l^{x_i}$ are all well-defined Borel probability measures satysfying $\text{w}\,\text{-}\lim_{i\to\infty}\nu_l^{x_i}=\nu_l^{x_0}$ and \linebreak$\text{w}\,\text{-}\lim_{i\to\infty}\mu_l^{x_i}=\mu_l^{x_0}$. The Feller property of $\{P(t)\}_{t\in\mathbb{R}_+}$ then gives the following: \linebreak$\text{w}\,\text{-}\lim_{i\to\infty}\mu_l^{x_i}P(t_{l+1})=\mu_l^{x_0}P(t_{l+1})$. 
Moreover, due to the choice of $r_{l+1}$, for which 
$\mu_l^{x_0}P\left(t_{l+1}\right)\left(\partial \left(B\left(z,r_{l+1}\right)\right)\right)=0$, 
and, again, the Portmanteau theorem, we get
\begin{align*}
\lim_{i\to\infty}\mu_l^{x_i}P\left(t_{l+1}\right)\left(B\left(z,r_{l+1}\right)\right)
=\mu_l^{x_0}P\left(t_{l+1}\right)\left(B\left(z,r_{l+1}\right)\right).
\end{align*}
Repeating the arguments already presented above, we can claim that there exists $i_{l+1}\geq i_l$ such that for all $i\geq i_{l+1}$
\begin{align*}
\mu_l^{x_i}P\left(t_{l+1}\right)\left(B\left(z,r_{l+1}\right)\right)>\alpha\quad\text{and}\quad \nu_{l+1}^{x_i}\in\mathcal{M}_1(S).
\end{align*}
Moreover,
\begin{align*}
\text{w}\,\text{-}\lim_{i\to\infty}\mu_l^{x_i}P\left(t_{l+1}\right)\left(\cdot\cap B\left(z,r_{l+1}\right)\right)=\mu_l^{x_0}P\left(t_{l+1}\right)\left(\cdot\cap B\left(x,r_{l+1}\right)\right),
\end{align*}
whence $\mu_{l+1}^{x_i}\in\mathcal{M}_1(S)$ for every $i\geq i_{l+1}$ and also 
$\text{w}\,\text{-}\lim_{i\to\infty}\nu_{l+1}^{x_i}=\nu_{l+1}^{x_0}$, \linebreak$\text{w}\,\text{-}\lim_{i\to\infty}\mu_{l+1}^{x_i}=\mu_{l+1}^{x_0}$.

In the final part of the proof, let us observe that for every $i\geq i_k$ and every $t\geq t_1+\ldots+t_k$ we have
\begin{align}\label{eq:sum}
\begin{aligned}
\delta_{x_i}P(t)
=&\alpha\nu_1^{x_i}P\left(t-t_1\right)
+\ldots+\alpha(1-\alpha)^{k-1}\nu_k^{x_i}P\left(t-t_1-\ldots -t_{k}\right)\\
&+(1-\alpha)^k\mu_k^{x_i}P\left(t-t_1-\ldots-t_k\right).
\end{aligned}
\end{align}
Since, for every $l\in\{1,\ldots,k\}$ and every $i\geq i_k$, $\textrm{supp}\left(\nu_l^{x_i}\right),\textrm{supp}\left(\nu_l^{x_0}\right)\subset\overline{B\left(z,r_l\right)}\subset B(z,r)$, we obtain, by Lemma \ref{lem1}, that for each $l\in\{1,\ldots,k\}$, each $i\geq i_k$ and each $t\geq T_0$
\begin{align}\label{estim}
\begin{aligned}
\left\lvert
\left\langle f,\nu_l^{x_i}P(t)-\nu_l^{x_0}P(t)\right\rangle\right\rvert
=&\left\lvert
\left\langle P(t)f,\nu_l^{x_i}\right\rangle
-\left\langle P(t)f,\nu_l^{x_0}\right\rangle\right\rvert\\
\leq &\left\lvert
\left\langle P(t)f-\left\langle f,\mu_*\right\rangle,\nu_l^{x_i}\right\rangle\right\rvert
+\left\lvert
\left\langle P(t)f-\left\langle f,\mu_*\right\rangle,\nu_l^{x_0}\right\rangle\right\rvert\leq \frac{\varepsilon}{2}.
\end{aligned}
\end{align}

Now, let $\tau:=t_1+\ldots+ t_k+T_0$ and $I:=i_k$. Referring subsequently to \eqref{sum_for_x0}, \eqref{eq:sum}, \eqref{estim} and \eqref{eq:k}, we get
\begin{align*}
\sup_{t\geq \tau} \left\lvert P(t)f\left(x_i\right)-P(t)f\left(x_0\right)\right\rvert
&=
\sup_{t\geq \tau} \left\lvert\left\langle f,\delta_{x_i} P(t)-\delta_{x_0} P(t)\right\rangle\right\rvert\\
&\leq
\sup_{t\geq T_0}
\frac{\varepsilon}{2}\left(\alpha+\alpha(1-\alpha)+\ldots+\alpha(1-\alpha)^{k-1}\right)\\
&\quad+(1-\alpha)^k
\left\lvert \left\langle f,\mu_k^{x_i}P(t)-\mu_k^{x_0}P(t)\right\rangle\right\rvert\\
&\leq \frac{\varepsilon}{2}+2(1-\alpha)^k\|f\|_{\infty}< \varepsilon\quad\text{for all}\quad i\geq I,
\end{align*}
and thus the proof is completed.
\end{proof}

Now, let us proceed with the proof of Lemma \ref{thm:e-prop_[0,T]}. 
\begin{proof}[Proof of Lemma \ref{thm:e-prop_[0,T]}]

Let $z\in S$. Suppose (contrary to our claim) that there exist $T\in\mathbb{R}_+$, $f\in\mathcal{C}_b(S)$ and  sequences $\{t_k\}_{k\in\NN}$, $\{x_k\}_{k\in \NN}$, of points in $[0,T]$ and $S$, respectively, where the latter converges to $z$, such that
\begin{equation}\label{eq:contrary}
\liminf_{k\to\infty}
\left\lvert 
P\left( t_k \right) f \left( x_k \right)
-P\left( t_k \right) f(z)
\right\rvert > \varepsilon\;\;\;\text{for some}\;\;\;\varepsilon > 0.
\end{equation}
Since the indicated sequence $\{t_k\}_{k\in\NN}$ is bounded, we can choose its subsequence (for the sake of simplicity of notation also denoted by $\{t_k\}_{k\in\NN}$) which converges monotonically to a certain point in $[0,T]$, say $t_0$. Let us now consider the following two cases: (a) $t_k\to t_0^+$ and (b) $t_k\to t_0^-$. 
\begin{itemize}
\item[(a)] According to \eqref{eq:contrary}, there exists $K\in\mathbb{N}$ such that for every $k\geq K$ we have
\begin{align}\label{eq:first}
\begin{aligned}
\varepsilon 
\le & 
\left\lvert 
P\left( t_k \right) f \left( x_k \right)
-P\left( t_k \right) f(z)
\right\rvert\\
\leq &
\left\lvert 
P\left( t_k-t_0 \right) P\left( t_0 \right) f \left( x_k \right) - P\left( t_0 \right) f \left( x_k \right)
\right\rvert
+
\left\lvert
P\left( t_0 \right) f \left( x_k \right) - P\left( t_0 \right) f \left( z \right)
\right\rvert \\
&+
\left\lvert 
P\left( t_k-t_0 \right) P\left( t_0 \right) f \left( z \right) - 
P\left( t_0 \right) f \left( z \right)
\right\rvert,
\end{aligned}
\end{align}
where the semigroup property of $\{P(t)\}_{t\in\mathbb{R}_+}$ has also been used above. 
Note that, due to the Feller property, $P(t_0)f\in\mathcal{C}_b(S)$, and therefore the second term on the right-hand side of \eqref{eq:first} converges to zero as $k\to\infty$. Moreover, observe that $u_k:=t_k-t_0\to 0^+$, and thus, referring to strong stochastic continuity of $\{P(t)\}_{t\in\mathbb{R}_+}$, we see that the first and the last therm on the right-hand side of \eqref{eq:first} also vanish if $k\to\infty$. The above, however, contradicts \eqref{eq:contrary}.
\item[(b)] According to \eqref{eq:contrary}, there exists $K\in\mathbb{N}$ such that for every $k\geq K$ we have
\begin{align}\label{eq:second}
\begin{aligned}
\varepsilon 
\leq &
\left\lvert 
P\left( t_k \right) f \left( x_k \right) - P\left( t_k \right)P\left( t_0 - t_k \right) f \left( x_k \right)
\right\rvert
+
\left\lvert
P\left( t_0 \right) f \left( x_k \right) - P\left( t_0 \right) f \left( z \right)
\right\rvert \\
&+
\left\lvert 
P\left( t_k\right) f \left( z \right) - 
P\left( t_k \right)P\left( t_0 - t_k \right) f \left( z \right)
\right\rvert,
\end{aligned}
\end{align}
where the semigroup property of $\{P(t)\}_{t\in\mathbb{R}_+}$ has also been used above. Due to the Feller property,  the second term on the right-hand side of \eqref{eq:second} converges to zero as $k\to\infty$. 
Let us further observe that 
\begin{align*}
\left\lvert 
P\left( t_k\right) f \left( z \right) - 
P\left( t_k \right)P\left( t_0 - t_k \right) f \left( z \right)
\right\rvert
&=
\left\lvert 
P\left( t_k\right) \left( f - 
P\left( t_0 - t_k \right) f\right) \left( z \right)
\right\rvert\\
&\leq \left\|f - 
P\left( t_0 - t_k \right) f\right\|_{\infty},
\end{align*} 
which expression converges to zero as $k\to\infty$, thanks to the assumed strong stochastic continuity. Using the same arguments, we see that the first term  on the right-hand side of \eqref{eq:second} also converges to zero as $k\to\infty$. The above, however, contradicts \eqref{eq:contrary}.
\end{itemize}
The proof is completed.
\end{proof}

Finally, employing Lemma \ref{thm:e-prop_[0,T]}, we can establish Theorem \ref{thm:eventual_vs_e}.

\begin{proof}[Proof of Theorem \ref{thm:eventual_vs_e}]
Let $\varepsilon>0$. Moreover, choose $z\in S$ and a sequence $\{x_k\}_{k\in\NN}$ converging to it. 
Obviously, for any $k\in\mathbb{N}$ and any $T\in\mathbb{R}_+$
\begin{align}
\label{eq:2sup}
\begin{aligned}
\sup_{t\in\mathbb{R}_+}
\left\lvert P(t)f\left( x_k \right)-P(t)f(z)\right\rvert
\leq &\sup_{t\in[0,T]}
\left\lvert P(t)f\left( x_k \right)-P(t)f(z)\right\rvert\\
&+
\sup_{t\geq T}\left\lvert P(t)f\left( x_k \right)-P(t)f(z)\right\rvert.
\end{aligned}
\end{align} 
We know, by the assumed eventual e-property in $\CC_b(S)$, that for every $f\in\mathcal{C}_b(S)$ 
there exist $\tau\in\mathbb{R}_+$ and $K\in\NN$ such that for every $k\geq K$  
we have
\begin{align}
\label{eq:eventual}
\sup_{t\geq \tau}\left\lvert P(t)f\left( x_k \right)-P(t)f(z)\right\rvert <\varepsilon/2.
\end{align}
Moreover, referring to Lemma \ref{thm:e-prop_[0,T]}, we obtain that there exists $M\in\NN$ such that for all $k\geq M$ 
\begin{equation}\label{eq:thm[0,T]}
\sup_{t\in[0,\tau]}
\left\lvert P(t)f\left( x_k \right)-P(t)f(z)\right\rvert <\varepsilon/2.
\end{equation}
Now, combining \eqref{eq:eventual} and \eqref{eq:thm[0,T]}, we see that for all $k\geq \max\{K,M\}$ the right-hand side of \eqref{eq:2sup} (with $T=\tau$) is smaller than $\varepsilon$, which completes the proof.
\end{proof}

\section*{Appendix I: Stochastic continuity of Markov semigroups with compact state spaces}\label{appendix}

The purpose of this appendix is to demonstrate that the notions of stochastic 
continuity and strong stochastic continuity of Markov semigroups coincide when the 
underlying state space is a compact metric space (Theorem 
\ref{thm_append}). 
This statement, however, is not true if we weaken 
the assumption on the given space, e.g. if this space is only locally compact (cf. Example \ref{ex:3}).

In what follows for all $\mu_1,\mu_2 \in \MM_1(S)$, $f \in \CC_b(S)$ and $\epsilon>0$ we will write 
$\mu_1 \sim_{f,\epsilon} \mu_2$ 
whenever $\lvert\langle f, 
\mu_1 - \mu_2 \rangle\rvert < \epsilon$.

Before proceeding to the desired theorem, let us first establish the following lemma (which shall be useful in the proof of Theorem \ref{thm_append}):

\begin{lemma}\label{lem-first}
Let $S$ be a Polish space, and let $\mu,\mu_1,\mu_2,\ldots \in \MM_1(S)$. Moreover, assume that $\{P(t)\}_{t\in\mathbb{R}_+}$ is a 
stochastically continuous at zero Markov-Feller semigroup. If $\text{w}\,\text{-}\lim_{n\to\infty}\mu_n = \mu$, then
\begin{equation*}
\forall_{\substack{\theta > 0\\ \epsilon > 0\\ f \in \CC_b(S)}} \;\;
\exists_{\substack{t_* \in (0, \theta) \\ \delta > 0 \\ n_0 \in \NN}}\;\;
\forall_{\substack{t\in B(t_*,\delta)\\ n \ge 
n_0 }}
\quad \mu_n P(t) \sim_{f,\epsilon} \mu.
\end{equation*}
\end{lemma}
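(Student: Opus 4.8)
The plan is to fix $\theta>0$, $\epsilon>0$ and $f\in\CC_b(S)$ and to reformulate the conclusion in terms of the real functions $g_n(t):=\bk{P(t)f}{\mu_n}=\bk{f}{\mu_n P(t)}$ and $g(t):=\bk{P(t)f}{\mu}$, together with the constant $g_\infty:=\bk{f}{\mu}$; indeed $\mu_nP(t)\sim_{f,\epsilon}\mu$ is exactly $|g_n(t)-g_\infty|<\epsilon$. Writing $g_n(t)-g_\infty=(g_n(t)-g(t))+(g(t)-g_\infty)$, I would record two elementary facts. First, by the Feller property each $P(t)f$ lies in $\CC_b(S)$, so $\text{w}\,\text{-}\lim_{n\to\infty}\mu_n=\mu$ gives $g_n(t)-g(t)=\bk{P(t)f}{\mu_n-\mu}\to0$ for every fixed $t$. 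Second, stochastic continuity at zero together with dominated convergence (the integrands are bounded by $\|f\|_\infty$ and $\mu,\mu_n\in\MM_1(S)$ are finite) yields $g(t)-g_\infty=\bk{P(t)f-f}{\mu}\to0$ as $t\to0^+$, so there is $\eta_0\in(0,\theta)$ with $|g(t)-g_\infty|<\epsilon/2$ for all $t\in(0,\eta_0)$. It thus remains to control $a_n(t):=g_n(t)-g(t)$ uniformly for $t$ in a small ball and $n$ large.

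A key preliminary observation is that each $g_n$ (and likewise $g$) is right-continuous in $t$: for $s\to0^+$ one has $g_n(t+s)=\bk{P(s)(P(t)f)}{\mu_n}=\bk{P(t)f}{\mu_nP(s)}$, and since $\mu_nP(s)\to\mu_n$ weakly (again by stochastic continuity at zero and dominated convergence) while $P(t)f\in\CC_b(S)$, we get $g_n(t+s)\to g_n(t)$. Hence $a_n=g_n-g$ is right-continuous in $t$ as well, whereas $a_n(t)\to0$ pointwise as $n\to\infty$.

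The crux --- and the step I expect to be the main obstacle --- is to upgrade this pointwise-in-$t$ convergence into a bound valid on a genuine ball, which I would achieve by a Baire category argument on the interval $(0,\eta_0)$. Set $D_N:=\{t\in(0,\eta_0):\ |a_n(t)|\le\epsilon/2\ \text{for all}\ n\ge N\}$; since $a_n(t)\to0$ for each $t$, the increasing family $\{D_N\}$ covers $(0,\eta_0)$, hence so does $\{\mathtt{Cl}(D_N)\}$. As $(0,\eta_0)$ is completely metrizable, the Baire theorem furnishes $N$ for which $\mathtt{Cl}(D_N)$ has nonempty interior, say an open interval $U$. Here right-continuity becomes essential: $D_N$ is dense in $U$ (as $U\subset\mathtt{Cl}(D_N)$), so for any $t\in U$ one finds $t_j\downarrow t$ with $t_j\in D_N$, and right-continuity of $a_n$ gives $|a_n(t)|=\lim_j|a_n(t_j)|\le\epsilon/2$ for every $n\ge N$. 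Thus the bound $|a_n(t)|\le\epsilon/2$ holds on all of $U$ for $n\ge N$, not merely on the dense set where it was assumed. Choosing $t_*\in U$ and $\delta>0$ with $B(t_*,\delta)\subset U\subset(0,\eta_0)$, and $n_0:=N$, I would combine $|a_n(t)|\le\epsilon/2$ with $|g(t)-g_\infty|<\epsilon/2$ to obtain $|g_n(t)-g_\infty|<\epsilon$ for all $t\in B(t_*,\delta)$ and $n\ge n_0$, which is the assertion. (To make the last inequality strict one simply runs the Baire step with threshold $\epsilon/4$ in the definition of $D_N$.)

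The only genuinely delicate point is the closedness issue inside the Baire argument: the sublevel sets of the merely right-continuous functions $a_n$ need not be closed, so one cannot take $D_N$ itself closed. Passing to the closures and then recovering the estimate on the entire interior $U$ via right-continuity --- approaching each point of $U$ from the right, which is precisely the regularity that stochastic continuity at zero supplies --- is what makes the scheme go through. It is worth emphasizing that this route needs neither tightness of $\{\mu_n\}$ nor any equicontinuity of the family $\{P(t)f\}_t$ in the space variable (the latter would amount to an e-property, which is unavailable here).
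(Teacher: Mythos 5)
Your proof is correct and follows essentially the same route as the paper's: both restrict to a short initial time interval supplied by stochastic continuity at zero, run a Baire category argument on that interval to locate a ball inside the closure of a set on which the convergence has uniformized in $n$, and then use right-continuity in $t$ (again a consequence of stochastic continuity at zero) to transfer the estimate from the dense subset to the whole ball. The only immaterial difference is that you compare $\mu_nP(t)$ with $\mu P(t)$ in the Baire step and treat $\lvert\bk{f}{\mu P(t)-\mu}\rvert$ separately, whereas the paper compares $\mu_nP(t)$ directly with $\mu$.
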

\begin{proof}
Fix $\theta > 0$, $\epsilon > 0$ and $f \in \CC_b(S)$. Further, choose  $\theta_0 \in (0, \theta)$ such that $\mu P(t) \sim_{f,\epsilon/2} \mu$ for all $t \le \theta_0$, which can be done thanks to the assumed stochastic continuity at zero of $\{P(t)\}_{t\in\mathbb{R}_+}$. 

For any $t \in [0,\theta]$ let $n_t \in \NN$ denote the smallest number such that for any  $n \ge n_t$ we have 
$\mu_n P(t) \sim_{f,\epsilon/2} \mu$. Next, for every $n\in\NN$ define 
$A_n := \{t \in [0,\theta_0]: n_t\leq n\}$. Observe that $A_n \subset A_{n+1}$ and $\bigcup_{n\in\NN} 
A_n = [0, \theta_0]$. Obviously, $[0, \theta_0]$ is a second category set, whence, there exists $n_0 \in \NN$ for which $\mathtt{Int} (\mathtt{Cl} ( A_{n_0})) \neq \emptyset$. In this connection, we can assume that there are some $t_*\in(0,\theta_0)$ and $\delta>0$ such that $B(t_*, 
\delta) \subset \mathtt{Cl} ( A_{n_0}) $. 

Fix $t \in B(t_*, \delta)$ and $n \ge n_0$. Define $B_n := B(t_*, \delta) \cap A_n 
$. The set $B_n$ is dense in $B(t_*, \delta)$. We can choose a sequence $(t_k)_{k \in \NN} 
\subset B_n$ so that $t_k \to t^+$ and an index  $k_0 \in \NN$ for which 
$\mu_n P(t_{k_0})\sim_{f,\epsilon/2}  \mu_n P(t)$. Then, we finally get
\begin{equation*}
\left\lvert\left\langle f, \mu_n P(t) - \mu \right\rangle\right\rvert 
\le \left\lvert\left\langle f, \mu_n P(t) -  
\mu_n P\left({t_{k_0}}\right) \right\rangle\right\rvert  + 
\left\lvert\left\langle f,  \mu_n P\left(t_{k_0}\right) - \mu \right\rangle\right\rvert  <  \epsilon,
\end{equation*}
which completes the proof.
\end{proof}

\begin{theorem}\label{thm_append}
If $S$ is a compact metric space and $\{P(t)\}_{t\in\mathbb{R}_+}$ is a stochastically continuous at zero Markov-Feller semigroup, then $\{P(t)\}_{t\in\mathbb{R}_+}$ is also strongly stochastically continuous.
\end{theorem}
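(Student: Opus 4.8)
The plan is to argue by contradiction, turning a failure of strong stochastic continuity into a pair of measures that Lemma \ref{lem-first} is forced to identify. Suppose the conclusion fails, so there are $f\in\CC_b(S)$, $\varepsilon>0$ and a sequence $t_k\downarrow 0$ with $\|P(t_k)f-f\|_{\infty}\ge\varepsilon$. For each $k$ I would pick $x_k\in S$ with $\lvert P(t_k)f(x_k)-f(x_k)\rvert>\varepsilon/2$, and then use compactness of $S$ to pass to a subsequence along which $x_k\to x_0$ for some $x_0\in S$. Since Dirac masses converge weakly whenever their atoms do, $\delta_{x_k}\to\delta_{x_0}$ weakly; combining $f(x_k)\to f(x_0)$ with the defect bound gives $\lvert\bk{f}{\delta_{x_k}P(t_k)}-\bk{f}{\delta_{x_0}}\rvert>\varepsilon/4$ for all large $k$. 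Because $S$ is compact, $\MM_1(S)$ is weakly (sequentially) compact, so after a further subsequence $\delta_{x_k}P(t_k)\to\nu$ weakly for some $\nu\in\MM_1(S)$, and passing to the limit yields $\lvert\bk{f}{\nu}-\bk{f}{\delta_{x_0}}\rvert\ge\varepsilon/4=:\varepsilon_0>0$.

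Next I would invoke Lemma \ref{lem-first} to prove, to the contrary, that $\nu P(s)$ is $f$-close to $\delta_{x_0}$ along a sequence of times $s\to 0^+$. Applying the lemma to the weakly convergent sequence $\delta_{x_k}\to\delta_{x_0}$ with parameters $\theta=1/m$, tolerance $\varepsilon_0/2$ and test function $f$, I obtain $t_*^{(m)}\in(0,1/m)$, $\delta_m>0$ and $N_m\in\NN$ such that $\delta_{x_k}P(t)\sim_{f,\varepsilon_0/2}\delta_{x_0}$ for every $t\in B(t_*^{(m)},\delta_m)$ and every $k\ge N_m$. The key manoeuvre is the semigroup identity $\delta_{x_k}P\!\left(t_k+t_*^{(m)}\right)=\left(\delta_{x_k}P(t_k)\right)P\!\left(t_*^{(m)}\right)$: since $t_k\to 0$, for all large $k$ one has $t_k<\delta_m$, hence $t_k+t_*^{(m)}\in B(t_*^{(m)},\delta_m)$, and the lemma gives $\lvert\bk{f}{\delta_{x_k}P(t_k+t_*^{(m)})}-\bk{f}{\delta_{x_0}}\rvert<\varepsilon_0/2$. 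Letting $k\to\infty$ with $m$ fixed and using the Feller property (which makes $\mu\mapsto\mu P(t_*^{(m)})$ weakly continuous, so $\delta_{x_k}P(t_k)P(t_*^{(m)})\to\nu P(t_*^{(m)})$), I arrive at $\lvert\bk{f}{\nu P(t_*^{(m)})}-\bk{f}{\delta_{x_0}}\rvert\le\varepsilon_0/2$ for every $m$.

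Finally I would send $m\to\infty$. Since $t_*^{(m)}\to 0^+$, stochastic continuity at zero gives $P(t_*^{(m)})f\to f$ pointwise with $\|P(t_*^{(m)})f\|_{\infty}\le\|f\|_{\infty}$, so dominated convergence against the probability measure $\nu$ yields $\bk{f}{\nu P(t_*^{(m)})}=\bk{P(t_*^{(m)})f}{\nu}\to\bk{f}{\nu}$. Passing to the limit in the previous bound gives $\lvert\bk{f}{\nu}-\bk{f}{\delta_{x_0}}\rvert\le\varepsilon_0/2$, contradicting $\lvert\bk{f}{\nu}-\bk{f}{\delta_{x_0}}\rvert\ge\varepsilon_0$. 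Hence no such $f$ exists and $\{P(t)\}_{t\in\mathbb{R}_+}$ is strongly stochastically continuous. I expect the \textbf{main obstacle} to be exactly the mismatch between the ``bad'' times $t_k\to 0$ produced by the hypothetical counterexample and the ``good'' times near $t_*^{(m)}$ delivered by Lemma \ref{lem-first}: these are reconciled by the shift $t_k\mapsto t_k+t_*^{(m)}$ together with the Feller continuity of a \emph{single} fixed operator $P(t_*^{(m)})$, while the two compactness inputs (of $S$ and of $\MM_1(S)$) are precisely what make the limiting scheme available.
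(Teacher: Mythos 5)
Your proof is correct, and it follows the same overall strategy as the paper's: argue by contradiction, use compactness of $S$ to extract $x_k\to x_0$ and Prokhorov to extract a weak limit $\nu$ of $\delta_{x_k}P(t_k)$ with $\lvert\bk{f}{\nu}-f(x_0)\rvert$ bounded below, and then use Lemma \ref{lem-first} together with the semigroup and Feller properties to force $\nu$ back onto $\delta_{x_0}$. The execution of the last stage differs in a way worth noting. The paper applies Lemma \ref{lem-first} once, to the evolved sequence $\mu_n:=\delta_{x_n}P(t_n)\to\mu$, with $\theta$ fixed in advance by stochastic continuity at zero at the point $x_0$; it then closes the argument with a single chain of three $\sim_{f,\varepsilon/3}$ relations, using the backward shift $\delta_{x_{n_1}}P(t_*)=\mu_{n_1}P(t_*-t_{n_1})$ and the fact that $t_*-t_{n_1}\in B(t_*,\delta)$. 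You instead apply the lemma to the simpler sequence $\delta_{x_k}\to\delta_{x_0}$, but you must do so for a whole family of scales $\theta=1/m$, use the forward shift $t_k+t_*^{(m)}$, pass to the limit in $k$ via Feller continuity of the single operator $P(t_*^{(m)})$, and then remove the residual $P(t_*^{(m)})$ by a second limit $m\to\infty$ using stochastic continuity at zero and dominated convergence against $\nu$. Your version costs an extra limiting layer but keeps the input to Lemma \ref{lem-first} elementary; the paper's version is shorter because feeding the lemma the evolved measures makes its conclusion land directly on the target measure $\mu$. Both reconciliations of the ``bad'' times $t_k\to 0$ with the ``good'' times near $t_*$ are valid.
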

\begin{proof}
Suppose (contrary to our claim) that $\{P(t)\}_{t\in\mathbb{R}_+}$ is not strongly stochastically continuous. Then, there are $f \in \CC_b(S)$, $\varepsilon > 0$, $N\in\NN$, $\{x_n\}_{n\in\NN} \subset S$ and \hbox{$\{t_n\}_{n\in\NN}\subset\mathbb{R}_+$} converging to $0$  such that for all $n\geq N$
\begin{equation}\label{eq:proof_append}
\left\lvert P\left(t_n\right) f\left(x_n\right)  - f\left(x_n\right)\right\rvert > 
\varepsilon.
\end{equation}
By compactness of $S$, 
we can choose a subsequence of $\{x_n\}_{n\in\NN}$ (for the sake of simplicity of notation also denoted by $\{x_n\}_{n\in\NN}$) which is convergent to some $x\in S$ and for which $\text{w}\,\text{-}\lim_{n\to\infty} \delta_{x_n} P(t_n) = \mu$ for some $\mu\in\mathcal{M}_1(S)$, where the latter is a~consequence of the Prokhorov theorem. Combining it with \eqref{eq:proof_append}, we get
\[ 
\left\lvert 
\left\langle f,\delta_{x_n} P(t_n)-\mu\right\rangle 
\right\rvert
+\left\lvert 
\left\langle f,\mu\right\rangle-f(x)\right\rvert
+\left\lvert f(x)-
f\left(x_n\right)\right\rvert 
\geq \left\lvert P\left(t_n\right) f\left(x_n\right)  - f\left(x_n\right)\right\rvert
>\varepsilon,  
\]
hence, after moving to the limit (with $n\to\infty$) on the right and left sides of the above inequality, we obtain
\begin{equation}\label{eq-first}
\lvert\langle f, \mu \rangle - f(x)\rvert \ge \varepsilon.
\end{equation}

Then, take $\theta > 0$ such that for all $t \le 
\theta$ we have $ \delta_x P(t) \sim_{f,\varepsilon/3} \delta_x$, which can be done thanks to the assumed stochastic continuity at zero of $\{P(t)\}_{t\in\mathbb{R}_+
}$. For all $n\in\NN$ let us introduce $\mu_n:=\delta_{x_n} P(t_n)$. By Lemma~\ref{lem-first}, there are $t_* 
\in (0, \theta)$, $\delta > 0$ and $n_0 \in \NN$  such that $\mu_n P(t) \sim_{f,\varepsilon/3} 
\mu$ for all $t\in B(t_*,\delta)$ and all $n \ge n_0$. It now suffices to observe that there exists  
$n_1 \ge \max\{n_0,N\}$ for which $t_{n_1}\leq \min\{t_*,\delta\}$ and $\delta_{x_{n_1}} P(t_*)
\sim_{f,\epsilon/3} \delta_x P(t_*)$. Hence,  we finally get the following:
\begin{align*}
&\delta_x \sim_{f,\epsilon/3} \delta_x P\left(t_*\right),\qquad 
\delta_x P\left(t_*\right) \sim_{f,\epsilon/3} 
\delta_{x_{n_1}} P\left(t_*\right) 
,\\
& \delta_{x_{n_1}} P\left(t_*\right)
=\delta_{x_{n_1}} P\left(t_{n_1}\right) P\left(t_*-t_{n_1}\right)
= \mu_{n_1} P\left(t_*-t_{n_1}\right)
\sim_{f,\epsilon/3} 
\mu,
\end{align*}
This, however, gives us  $\delta_x \sim_{f,\epsilon} \mu$, and thus contradicts \eqref{eq-first}. The proof is completed.
\end{proof}

\section*{Appendix II: Relations  between the notions of the e-property in $R$ for different sets $R$}\label{appendix2}

By $\LL_{bs}(S)$ let us denote the subfamily of $\LL_b(S)$ that consists of non-negative Lipschitz continuous functions with bounded supports. 

In this appendix, we present an analysis of various definitions of the \hbox{e-property} in $R$, where $R \in \{ \LL_{bs}(S), \LL_{b}(S), \CC_b(S) \}$, which are non-equivalent in general, as demonstrated in the examples below.


\begin{example}
    Let $S = [1, \infty)$ be a given Polish space equipped with the Euclidean metric, and let $\{P(t)\}_{t\in\mathbb{R}_+}$ be a Markov semigroup determined by $\delta_x P(t) = \delta_{x+t}$ for $t \in\mathbb{R}_+$ and $x \ge 1$. Using the similar reasoning as presented in \cite[Remark 2.1]{kukulski_wojewodka} we can show that $\{P(t)\}_{t\in\mathbb{R}_+}$ has the e-property in $\LL_b(S)$, but does not have the e-property in $\CC_b(S)$ at any point $z \in S$.
    \end{example}

    \begin{example}
    Let $S = [1, \infty)$ be a given Polish space equipped with the Euclidean metric, and let $\{P(t)\}_{t\in\mathbb{R}_+}$ be a Markov semigroup determined by $\delta_x P(t) = \delta_{xe^t}$ for $t \ge 0$ and $x \ge 1$. Take arbitrary $f \in \LL_{bs}(S)$ and $z \in S$. Let $L_f$ be a Lipschitz constant of $f$. There is $x_0 \ge z$, such that $f(x) = 0$ for all $x \ge x_0$. Therefore, we get 
    \begin{equation*}
    \begin{split}
\sup_{t \in\mathbb{R}_+} \,\lvert P(t)f(x)-P(t)f(z)\rvert
&=\sup_{t \in\mathbb{R}_+} \,\left\lvert f\left(xe^t\right)-f\left(ze^t\right)\right\rvert\\
&=\sup_{\ln(x_0/(z-0.5)) \geq t \geq 0} \,\lvert f(xe^t)-f(ze^t)\rvert\\
&\le\sup_{\ln(x_0/(z-0.5)) \geq t \geq 0} \, L_f\, e^t \lvert x-z\rvert = \frac{L_f x_0}{z-0.5}\, \lvert x-z\rvert
\end{split}
\end{equation*}
    for any $x\in[z-0.5,z+0.5]$, and thus
\begin{equation*}
\lim_{x \to z}\sup_{t \in\mathbb{R}_+} \,\lvert P(t)f(x)-P(t)f(z) \rvert= 0.
\end{equation*} 
We therefore see that $\{P(t)\}_{t\in\mathbb{R}_+}$ has the e-property in $\LL_{bs}(S)$. 

To prove that $\{P(t)\}_{t\in\mathbb{R}_+}$ does not have the e-property in $\LL_b(S)$ at any $z \in S$, let us define \hbox{$f \in \LL_b(S)$} as follows:
\begin{equation*}
    \begin{split}
        f(x) &= 2(x-1), \quad x \in [1,1.5),\\
        f(x) &= -2(x-2), \quad x \in [1.5,2),\\
        f(x) &= f(x-1), \quad x \ge 2.\\
    \end{split}
\end{equation*}
Further, let $z\in S$ be chosen arbitrarily. Consider a sequence $\{x_n\}_{n\in\NN}$  of points in $S$ such that $x_n = (1+1/(2n))z$ for every $n \in \NN$, and note  that it converges to $z$. Moreover, consider a sequence $\{t_n\}_{n\ge z}\subset \mathbb{R}_+$, where  $t_n = \ln(n/z)$ for $n\ge z$. We then get 
\begin{equation*}
    \lvert P(t_n)f(x_n)-P(t_n)f(z)\rvert = \left\lvert f\left(x_ne^{t_n}\right) - f\left(ze^{t_n}\right) \right\rvert = \lvert f(n+0.5) - f(n) \rvert = 1, 
\end{equation*}
and thus $\{P(t)\}_{t\in\mathbb{R}_+}$ fails the e-property in $\LL_b(S)$ at any $z\in S$.
\end{example}

 Before we present conditions under which certain definitions of the e-property coincide, let us provide an extension of the Portmanteau theorem.
\begin{lemma}\label{lem:weak-b-bs}
    Let $S$ be a Polish metric  space, and let $\mu\in \MM_1(S)$, $\{\mu_t\}_{t\in\mathbb{R}_+}\subset\MM_1(S)$. Then  $\text{w}\,\text{-}\lim_{t\to\infty}\mu_t = \mu$ if and only if $\lim_{t \to 	\infty}\bk{f}{\mu_t}= \bk{f}{\mu}$ for any $f\in \LL_{bs}(S)$.
\end{lemma}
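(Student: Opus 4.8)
The forward implication is immediate: since $\LL_{bs}(S)\subset\CC_b(S)$, weak convergence of $\{\mu_t\}$ to $\mu$ in particular yields $\bk{f}{\mu_t}\to\bk{f}{\mu}$ for every $f\in\LL_{bs}(S)$. All the content lies in the converse, so assume $\lim_{t\to\infty}\bk{f}{\mu_t}=\bk{f}{\mu}$ for all $f\in\LL_{bs}(S)$. My plan is to upgrade this to convergence against the larger class $\LL_b(S)$ and then invoke the version of the Portmanteau theorem recalled in Section~\ref{sec:1}, by which, for probability measures, testing against $\LL_b(S)$ is equivalent to weak convergence. Since the limit here is taken as $t\to\infty$ rather than along a sequence, I would first reduce to sequences: if weak convergence failed, there would be $g\in\CC_b(S)$, some $\varepsilon>0$ and $t_n\to\infty$ with $\lvert\bk{g}{\mu_{t_n}}-\bk{g}{\mu}\rvert\ge\varepsilon$, and applying the (sequential) Portmanteau equivalence along $\{t_n\}_{n\in\NN}$ would give a contradiction. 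Finally, splitting an arbitrary $f\in\LL_b(S)$ as $f=f^+-f^-$, where $f^+=\max(f,0)$ and $f^-=\max(-f,0)$ are again non-negative elements of $\LL_b(S)$, reduces the task to proving $\bk{f}{\mu_t}\to\bk{f}{\mu}$ for non-negative $f\in\LL_b(S)$.

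The core step treats such an $f\ge 0$. The obstacle is that functions in $\LL_{bs}(S)$ have bounded support, so one cannot test directly against $f$; on a non-compact Polish space one must rule out escape of mass to infinity. I would exploit that $\mu$ is a \emph{probability} measure and that every finite Borel measure on a Polish space is tight (Ulam's theorem). Fix a reference point $x_0\in S$ and, given $\varepsilon>0$, choose $R$ so large that $\mu(B(x_0,R))>1-\varepsilon$ (possible because a compact set carrying almost all of the mass of $\mu$ is bounded). Introduce a Lipschitz cutoff with $\1_{B(x_0,R)}\le\psi\le 1$ and support contained in $\overline{B(x_0,2R)}$, for instance $\psi(x)=\max\!\left(0,\min\!\left(1,2-\rho(x,x_0)/R\right)\right)$. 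Then $\psi\in\LL_{bs}(S)$, and since $f$ is non-negative, bounded and Lipschitz, so is the product $f\psi$, whence $f\psi\in\LL_{bs}(S)$ as well.

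I would then decompose $f=f\psi+f(1-\psi)$ and estimate the two pieces. Because $f\psi\in\LL_{bs}(S)$, the hypothesis gives $\bk{f\psi}{\mu_t}\to\bk{f\psi}{\mu}$. For the tail piece, $0\le f(1-\psi)\le\|f\|_{\infty}(1-\psi)$, so, using $\mu_t(S)=\mu(S)=1$, both $\bk{f(1-\psi)}{\mu_t}\le\|f\|_{\infty}\left(1-\bk{\psi}{\mu_t}\right)$ and $\bk{f(1-\psi)}{\mu}\le\|f\|_{\infty}\left(1-\bk{\psi}{\mu}\right)$. Since $\psi\in\LL_{bs}(S)$, we have $\bk{\psi}{\mu_t}\to\bk{\psi}{\mu}\ge\mu(B(x_0,R))>1-\varepsilon$, so each tail term is eventually bounded by a quantity smaller than $\|f\|_{\infty}\varepsilon$. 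Combining these bounds gives $\limsup_{t\to\infty}\lvert\bk{f}{\mu_t}-\bk{f}{\mu}\rvert\le 2\|f\|_{\infty}\varepsilon$, and letting $\varepsilon\downarrow 0$ closes the argument; the reduction of the first paragraph then delivers weak convergence. The one delicate point — and the \emph{main obstacle} — is precisely this simultaneous use of a single cutoff $\psi$ both to approximate $f$ on the bulk and to dominate the tail mass uniformly in large $t$, which is exactly where the full mass of the probability measure $\mu$ does the work of preventing leakage at infinity.
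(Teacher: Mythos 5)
Your proof is correct. It departs from the paper's argument in which Portmanteau criterion it targets: you upgrade convergence from $\LL_{bs}(S)$ to all of $\LL_b(S)$ by splitting $f=f^+-f^-$ and truncating each non-negative piece with a Lipschitz bump $\psi$ supported on a large ball, controlling the tail via $1-\bk{\psi}{\mu_t}\to 1-\bk{\psi}{\mu}<\varepsilon$; the paper instead verifies the closed-set criterion $\limsup_{t\to\infty}\mu_t(F)\le\mu(F)$ directly, sandwiching $\1_F\le f_{F,\epsilon}\le\1_{F_\epsilon}$ with $f_{F,\epsilon}\in\LL_{bs}(S)$ for bounded closed $F$ and then intersecting a general closed $F$ with a large ball carrying mass at least $1-\epsilon$. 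The underlying mechanism is the same in both — the full unit mass of the probability measures, witnessed through an $\LL_{bs}(S)$ bump on a large ball, rules out leakage of mass to infinity — but the decompositions are genuinely different. Your route has the advantage of needing only the bounded-Lipschitz characterization of weak convergence (plus your explicit sequential reduction, which is a point the paper glosses over for the continuous parameter $t\to\infty$), and it is entirely self-contained at the level of integrals; the paper's route delivers the closed-set inequality directly, which is the form most immediately reusable elsewhere. Two minor remarks: invoking Ulam's tightness theorem to find the large ball is heavier than necessary, since continuity from below along $B(x_0,n)\uparrow S$ already gives $\mu(B(x_0,R))>1-\varepsilon$; and it is worth stating explicitly (as you implicitly use) that the product of two bounded Lipschitz functions is again bounded Lipschitz, so that $f\psi\in\LL_{bs}(S)$.
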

\begin{proof}
    The direct implication obviously follows from the definition of weak convergence, hence we shall only prove the reverse implication. To do so, we will show that $\limsup_{t\to\infty}\mu_t(F)\leq\mu(F)$ for all closed sets $F\subset S$.
For any $\epsilon > 0$ and an arbitrary closed set $F \subset S$ let us define a closed set $$F_\epsilon \coloneqq \{ x \in S: \text{dist}(x,F) \le \epsilon\}.$$ Moreover, let us define the functions $f_{F,\epsilon} \in \LL_b(S)$ as $f_{F, \epsilon}(x) = 1 - \text{dist}(x,F) / \epsilon$ for $x \in F_\epsilon$ and $f_{F, \epsilon}(x) = 0$ for $x \not\in F_\epsilon$. Observe that $f_{F,\epsilon} \in \LL_{bs}(S)$ if $F$ is a bounded subset of $S$.

Let $\epsilon > 0$. If $F \subset S$ is closed and bounded, then 
\begin{equation}\label{sth}
    \limsup_{t\to\infty}\mu_t(F) \le \limsup_{t\to\infty} \bk{f_{F,\epsilon}}{\mu_t} = \bk{f_{F,\epsilon}}{\mu} \le \mu(F_\epsilon),
\end{equation}
and, in view of the fact that $\lim_{\epsilon \to 0} \mu(F_\epsilon) = \mu(F) $, we obtain the desired inequality. 
Now, let us take an arbitrary closed set $F \subset S$. Define a closed ball $B$ such that $\mu(B) \ge 1 - \epsilon$. Then,
\begin{equation*}
    1-\epsilon \le \bk{f_{B,\epsilon}}{\mu} = \lim_{t \to \infty} \bk{f_{B,\epsilon}}{\mu_t} \le \liminf_{t \to \infty} \mu_t(B_\epsilon),
\end{equation*}
which, due to \eqref{sth}, leads to
\begin{equation*}
    \limsup_{t\to\infty}\mu_t(F) \le \limsup_{t\to\infty}\mu_t(F \cap B_\epsilon) + \limsup_{t\to\infty}\mu_t(F \cap B'_\epsilon) \le \mu(F \cap B_\epsilon) + \epsilon.
\end{equation*}
Eventually, we get $\limsup_{t\to\infty}\mu_t(F) \le \mu(F) + \epsilon$ for any $\epsilon > 0$.
\end{proof}

A regular Markov semigroup $\{P(t)\}_{t\in\mathbb{R}_+}$ which is stochastically continuous at zero is \emph{right stochastically continuous}, that is, $\lim_{t\to t_0^+}P(t)f(x)=P(t_0)f(x)$ 
for all $x\in S$, all $t_0 \in\mathbb{R}_+$ and all $f\in \CC_b(S)$. If additionally $\{P(t)\}_{t\in\mathbb{R}_+}$ has the \hbox{e-property} in $\LL_b(S)$, we conclude that it is stochastically continuous \cite[Proposition 3.6.6]{ziemlanska_21}. In the following lemma we show that the same conclusion can be achieved if $\{P(t)\}_{t\in\mathbb{R}_+}$ has the e-property in $\LL_{bs}(S)$. 

\begin{lemma}\label{lem:stoch-bs}
    Let $\{P(t)\}_{t\in\mathbb{R}_+}$ be a regular Markov semigroup which is stochastically continuous at zero  and which has the e-property in $\LL_{bs}(S)$. Then $\{P(t)\}_{t\in\mathbb{R}_+}$ is stochastically continuous.
\end{lemma}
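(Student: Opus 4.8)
The plan is to use that the semigroup is already \emph{right} stochastically continuous (as recorded in the paragraph preceding the lemma, right continuity follows from stochastic continuity at zero together with the semigroup property), so that only \emph{left} continuity remains: I must show $\lim_{t\to t_0^-}P(t)f(x)=P(t_0)f(x)$ for every $x\in S$, every $t_0>0$ and every $f\in\CC_b(S)$. First I would shrink the class of test functions. Since this assertion is exactly the weak convergence $\delta_xP(t)\to\delta_xP(t_0)$ as $t\uparrow t_0$, Lemma~\ref{lem:weak-b-bs} lets me verify it only for $f\in\LL_{bs}(S)$. I would also observe that the e-property in $\LL_{bs}(S)$ forces each $P(t)$ to map $\LL_{bs}(S)$ into $\CC_b(S)$ (equicontinuity of a family entails continuity of each of its members), so that, again via Lemma~\ref{lem:weak-b-bs}, the semigroup is Feller and Lemma~\ref{lem-first} becomes available.

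Next I would argue by contradiction, in the spirit of the proof of Theorem~\ref{thm_append}. If left continuity fails at some $t_0$, there are $x\in S$, $f\in\LL_{bs}(S)$, $\varepsilon>0$ and $t_n\uparrow t_0$ with $|\bk{f}{\mu_n-\mu_0}|\ge\varepsilon$, where $\mu_n:=\delta_xP(t_n)$ and $\mu_0:=\delta_xP(t_0)$. The semigroup identity gives $\mu_nP(s_n)=\mu_0$ with $s_n:=t_0-t_n\downarrow 0$, whence, using regularity, $\bk{f}{\mu_0}-\bk{f}{\mu_n}=\bk{P(s_n)f-f}{\mu_n}$. Suppose for the moment that a subsequence of $\{\mu_n\}$ converges weakly to some $\mu\in\MM_1(S)$. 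Because the e-property makes $\{P(s)f\}_{s\ge 0}$ equicontinuous and stochastic continuity at zero gives $P(s_n)f\to f$ pointwise, the functions $g_n:=P(s_n)f-f$ tend to $0$ uniformly on compact sets while staying bounded by $2\|f\|_\infty$. Since a weakly convergent sequence together with its limit is tight, I may fix a compact set carrying all but an arbitrarily small fraction of the mass of every $\mu_n$, split $\bk{g_n}{\mu_n}$ over it and its complement, and conclude $\bk{g_n}{\mu_n}\to 0$. Then $\bk{f}{\mu_n}\to\bk{f}{\mu_0}$, while weak convergence gives $\bk{f}{\mu_n}\to\bk{f}{\mu}$; hence $\bk{f}{\mu}=\bk{f}{\mu_0}$ and $\bk{f}{\mu_n}\to\bk{f}{\mu_0}$, contradicting $|\bk{f}{\mu_n-\mu_0}|\ge\varepsilon$.

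The one genuinely delicate point — and the step I expect to be the main obstacle — is the extraction of the weakly convergent subsequence, that is, the tightness of $\{\delta_xP(t_n)\}_n$ as $t_n\uparrow t_0$, equivalently the exclusion of an escape of mass to infinity along the left approach to $t_0$. On a non-compact Polish space this is not automatic: right stochastic continuity controls $t\mapsto\delta_xP(t)$ only from the right, and a weakly right-continuous path may shed mass at infinity at a left endpoint. This is exactly where compactness of $S$ (through Prokhorov's theorem) was used without comment in Theorem~\ref{thm_append}, and here it must instead be supplied by the e-property. My plan for this step is to exploit the identity $\mu_0=\mu_nP(s_n)$ together with the tightness of the fixed measure $\mu_0$: choosing a compactly supported cutoff $\psi\in\LL_{bs}(S)$ with $\bk{\psi}{\mu_0}$ close to $1$ and writing $\bk{\psi}{\mu_0}=\bk{P(s_n)\psi}{\mu_n}$, I would use the equicontinuity of $\{P(t)\psi\}_t$ and $P(s_n)\psi\to\psi$ (assisted, if needed, by Lemma~\ref{lem-first}) to transfer this lower bound back to $\{\mu_n\}$ and localise their mass on a compact set uniformly in $n$. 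Controlling this pull-back through $P(s_n)$ — ensuring that the mass $\mu_0$ places on a compact set genuinely originates, under $\mu_n$, from a compact set rather than from infinity — is the technical heart of the argument and the point that most repays careful attention.
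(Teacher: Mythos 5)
Your reduction to left continuity and to test functions in $\LL_{bs}(S)$ is sound, and your contradiction scheme would close if you could extract a weakly convergent subsequence of $\mu_n=\delta_xP(t_n)$. But that tightness step, which you yourself flag as the technical heart, is left unproved, and the mechanism you sketch for it does not work: from $\bk{\psi}{\mu_0}=\bk{P(s_n)\psi}{\mu_n}\ge 1-\epsilon$ with $0\le\psi\le 1$ supported in a bounded set $K$ you cannot conclude that the $\mu_n$ charge a fixed compact set, because $P(s_n)\psi(y)\le P(s_n)(y,K)$ need not be small for $y$ far from $K$ --- nothing in the hypotheses prevents points near infinity from jumping into $K$ in time $s_n$ (this is exactly the kind of behaviour exhibited in Example~\ref{ex:3}, where mass placed at $1/n$ reappears at $n$ after time $1/n$). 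So, as written, the proof has a genuine gap.

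The gap is an artifact of factorizing the semigroup in the unfavourable order. You write $\mu_0=\mu_nP(s_n)$, applying the small increment \emph{after} $P(t_n)$, which forces you to control $\bk{P(s_n)f-f}{\mu_n}$ against the poorly localized measures $\mu_n$. The paper's proof applies the small increment \emph{first}: for $s\ge t$ with $s-t$ small one has $P(s)f(z)=\bk{P(t)f}{\delta_zP(s-t)}$; stochastic continuity at zero localizes $\delta_zP(s-t)$ on a ball $B(z,r)$ up to mass $\epsilon$, and the e-property in $\LL_{bs}(S)$ at $z$ makes $P(t)f$ lie within $\epsilon$ of $P(t)f(z)$ on that ball \emph{uniformly in} $t$, whence $\lvert P(s)f(z)-P(t)f(z)\rvert\le\epsilon+2\|f\|_\infty\epsilon$ whenever $0\le s-t\le\tau$. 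This yields uniform continuity of $t\mapsto P(t)f(z)$ for $f\in\LL_{bs}(S)$ (left and right continuity at once), and Lemma~\ref{lem:weak-b-bs} upgrades the conclusion to $\CC_b(S)$ --- no tightness, no Prokhorov theorem, and no Feller property are needed. I recommend you redo the argument with this factorization.
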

\begin{proof}
    Let $z \in S$, $\epsilon > 0$ and $f \in \LL_{bs}(S)$. Since $\{P(t)\}_{t\in\mathbb{R}_+}$ is assumed to have the e-property in $\LL_{bs}(S)$, there is $B(z,r)$ with some $r >0$ such that \hbox{$\lvert P(t)f(x) - P(t)f(z) \rvert \le \epsilon$} for any $t \in\mathbb{R}_+$ and $x \in B(z,r)$. Additionally, by the stochastic continuity at zero, we have $\text{w}\,\text{-}\lim_{ t \to 0^+} \delta_z P(t) = \delta_z$, an thus, by the Portmanteau theorem, there exists $\tau > 0$ such that $\delta_z P(t)(B(z, r)) > 1 - \epsilon$ for every $t \le \tau$. Therefore, for any $s, t \in \mathbb{R}_+$ satisfying $\tau \ge s - t \ge 0$, we obtain
    \begin{equation}
    \begin{split}
        \lvert P(s) f (z) - P(t)f(z) \rvert  =& \lvert \bk{P(t) f }{\delta_zP(s-t)} - P(t)f(z) \rvert \\ \le & \left\lvert \int_{B(z,r)} P(t) f(x) \delta_zP(s-t)(dx) - P(t)f(z) \right\rvert + \|f\|_\infty \epsilon \\
        \le &  \int_{B(z,r)}\left\lvert P(t) f(x) - P(t) f(z)\right\rvert \delta_zP(s-t)(dx)  \\+& \left\lvert P(t) f(z) \delta_zP(s-t)(B(z,r)) - P(t)f(z) \right\rvert + \|f\|_\infty \epsilon \\ \le & \epsilon + 2 \|f\|_\infty \epsilon.
    \end{split}
    \end{equation}
    Eventually, by Lemma~\ref{lem:weak-b-bs}, the function $t \mapsto P(t)f(z)$ is continuous for any $f \in \CC_b(S)$.
\end{proof}

\begin{theorem}
    A regular stochastically continuous at zero Markov semigroup $\{P(t)\}_{t\in\mathbb{R}_+}$ that is asymptotically stable and has the e-property in $\LL_{bs}(S)$ possesses also the e-property in $\CC_b(S)$.
\end{theorem}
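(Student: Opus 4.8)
The plan is to pass from the hypothesised e-property in $\LL_{bs}(S)$ to the e-property in $\CC_b(S)$ in two stages, $\LL_{bs}(S)\rightsquigarrow\LL_b(S)\rightsquigarrow\CC_b(S)$, the engine in both being a uniform-in-$t$ tightness of the orbit of $\delta_z$. First I record two reductions. Since the dual operators are linear and $h^+,h^-\in\LL_{bs}(S)$ whenever $h$ is Lipschitz with bounded support, the e-property in $\LL_{bs}(S)$ automatically holds for every (signed) Lipschitz $h$ of bounded support. Moreover, Lemma~\ref{lem:stoch-bs} promotes stochastic continuity at zero to full stochastic continuity, so that $t\mapsto\delta_z P(t)$ is weakly continuous on $[0,\infty)$.

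The crucial structural fact is that $\{\delta_z P(t):t\ge0\}$ is tight. Indeed, weak continuity in $t$ together with $\text{w-}\lim_{t\to\infty}\delta_z P(t)=\mu_*$ (asymptotic stability) makes $t\mapsto\delta_z P(t)$ a continuous map from the compact space $[0,\infty]$ into $\MM_1(S)$ with its metrizable weak topology; its image is weakly compact, hence tight by the Prokhorov theorem. Thus, for any $\eta>0$ there is a compact $K\subset S$ with $\delta_z P(t)(K)>1-\eta$ for all $t\ge0$. I then need a transfer device: if $\phi\in\LL_{bs}(S)$ satisfies $\mathbf 1_K\le\phi\le1$, then $\bk{\phi}{\delta_z P(t)}\ge\delta_z P(t)(K)>1-\eta$, and the e-property applied to $\phi$ furnishes $\delta_1>0$ so that $\sup_t\lvert\bk{\phi}{\delta_x P(t)}-\bk{\phi}{\delta_z P(t)}\rvert<\eta$ for $x\in B(z,\delta_1)$, whence $\bk{\phi}{\delta_x P(t)}>1-2\eta$ for all $t$. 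Securing this uniform-in-$t$ concentration for the nearby points $x$ is what I expect to be the main obstacle; everything else is bookkeeping.

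For the first stage, fix $h\in\LL_b(S)$ and split $h=h\phi+h(1-\phi)$ with $\phi$ as above (so $h\phi$ is Lipschitz with bounded support). The e-property already available for $h\phi$ controls $\sup_t\lvert\bk{h\phi}{\delta_x P(t)-\delta_z P(t)}\rvert$. For the remainder I use the pointwise bound $\lvert h(1-\phi)\rvert\le\|h\|_\infty(1-\phi)$, so that $\lvert\bk{h(1-\phi)}{\delta_y P(t)}\rvert\le\|h\|_\infty(1-\bk{\phi}{\delta_y P(t)})$, which is $<\|h\|_\infty\eta$ for $y=z$ and $<2\|h\|_\infty\eta$ for $y=x\in B(z,\delta_1)$, uniformly in $t$. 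Letting $x\to z$ and then $\eta\to0$ yields the e-property in $\LL_b(S)$.

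For the second stage, fix $f\in\CC_b(S)$, put $M:=\|f\|_\infty$, and choose the compact $K$ as above. Since $f|_K$ is uniformly continuous, a Lipschitz approximation on $K$ extended by the McShane formula and truncated to $[-M,M]$ gives $g\in\LL_b(S)$ with $\|g\|_\infty\le M$ and $\sup_K\lvert f-g\rvert<\epsilon'$. The set $G:=\{y\in S:\lvert f(y)-g(y)\rvert<\epsilon'\}$ is then open and contains $K$, so $d:=\dist(K,G^c)>0$ and $\psi:=(1-\dist(\cdot,K)/d)^+$ lies in $\LL_{bs}(S)$ with $\mathbf 1_K\le\psi\le\mathbf 1_G$. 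The transfer device applied to $\psi$ gives $\delta_x P(t)(G)\ge\bk{\psi}{\delta_x P(t)}>1-2\eta$, i.e. $\delta_x P(t)(G^c)<2\eta$, uniformly in $t$ for $x$ near $z$. Writing $\bk{f}{\delta_x P(t)-\delta_z P(t)}=\bk{g}{\delta_x P(t)-\delta_z P(t)}+\bk{f-g}{\delta_x P(t)-\delta_z P(t)}$, the first term is handled by the $\LL_b(S)$-e-property from the first stage, while the second is estimated by splitting each integral over $G$ (where $\lvert f-g\rvert<\epsilon'$) and $G^c$ (where $\lvert f-g\rvert\le2M$ and the mass is $<2\eta$), giving a bound of order $\epsilon'+M\eta$ uniformly in $t$. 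Choosing $\epsilon'$ and $\eta$ small enough at the start makes the total supremum over $t$ less than any prescribed $\varepsilon$, which is the e-property in $\CC_b(S)$ at the arbitrary point $z$.
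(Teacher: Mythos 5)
Your proposal is correct, but it follows a genuinely different route from the paper. The paper argues by contradiction: assuming $\lvert P(t_n)f(x_n)-P(t_n)f(z)\rvert>\epsilon$ along $x_n\to z$, it splits into the cases of bounded and unbounded $\{t_n\}$, identifies the weak limit of $\delta_zP(t_{k_n})$ along a subsequence (as $\delta_zP(t_0)$ via Lemma~\ref{lem:stoch-bs}, or as $\mu_*$ via asymptotic stability), shows using the $\LL_{bs}(S)$ e-property that $\bk{g}{\delta_{x_{k_n}}P(t_{k_n})}$ converges to the same limit for every $g\in\LL_{bs}(S)$, and then invokes Lemma~\ref{lem:weak-b-bs} (weak convergence can be tested on $\LL_{bs}(S)$ alone) to upgrade this to weak convergence and contradict the separation under $f\in\CC_b(S)$. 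You instead work directly and uniformly in $t$: the same Lemma~\ref{lem:stoch-bs} plus asymptotic stability gives you weak continuity of $t\mapsto\delta_zP(t)$ on the compactified half-line, hence tightness of the whole orbit by Prokhorov; the e-property applied to a Lipschitz cutoff transfers this concentration to nearby initial points uniformly in $t$; and two approximation stages ($h=h\phi+h(1-\phi)$, then a McShane--Lipschitz approximation of $f$ on the compact set) finish the argument. All the individual steps check out (the extension to signed bounded-support Lipschitz functions via $h^+,h^-$, the positivity of $\dist(K,G^c)$, the order of limits in $\eta$ and $\epsilon'$). What your approach buys is an explicit quantitative estimate and the e-property in $\LL_b(S)$ as a free intermediate result, together with the uniform-in-$t$ tightness of $\{\delta_xP(t)\}$ for $x$ near $z$, which is of independent interest; the paper's subsequential argument is shorter and delegates the cutoff work to the Portmanteau-type Lemma~\ref{lem:weak-b-bs}, which plays essentially the role your truncation estimates play. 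Both proofs rest on the same two pillars: Lemma~\ref{lem:stoch-bs} and asymptotic stability.
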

\begin{proof}
    Assume, by the contradiction, that there is $z \in S$, $f \in \CC_b(S)$, $\epsilon>0$, \hbox{$\{t_n\}_{n \in \NN} \subset \RR_+$} and $\{x_n\}_{n \in \NN}\subset S$ converging to $z$ such that
    \begin{equation}\label{eq:thm-e-prop-eq}
        \lvert P(t_n) f(x_n) - P(t_n) f(z) \rvert > \epsilon.
    \end{equation}
    If the sequence $\{t_n\}_{n \in \NN}$ is bounded, let us take its subsequence $\{t_{k_n}\}_{n\in\NN}$ converging to some $t_0\in\mathbb{R}_+$. As $\{P(t)\}_{t\in\mathbb{R}_+}$ is stochastically continuous at zero and has the \hbox{e-property} in $\LL_{bs}(S)$, by Lemma~\ref{lem:stoch-bs}, it is also stochastically continuous. Hence, we get $\text{w}\,\text{-}\lim_{n\to\infty}\delta_{z}P(t_{k_n}) = \delta_{z}P(t_0)$. Otherwise, let us take a subsequence $\{t_{k_n}\}_{n\in\NN}$ of $\{t_n\}_{n \in \NN}$ that is divergent to $\infty$. As $\{P(t)\}_{t\in\mathbb{R}_+}$ is asymptotically stable (with the unique invariant probability measure $\mu_*$) we get \linebreak$\text{w}\,\text{-}\lim_{n\to\infty}\delta_{z}P(t_{k_n}) = \mu_*$.

    Let us take an arbitrary $g \in \LL_{bs}(S)$, and let us denote by $\mu$ either $\delta_z P(t_0)$ or $\mu_*$ (depending on the case that we consider). We have
    \begin{equation}\label{last}
        \left\lvert P\left(t_{k_n}\right)g\left(x_{k_n}\right) - \bk{g}{\mu} \right\rvert 
        \le \left\lvert P\left(t_{k_n}\right)g\left(x_{k_n}\right) - P\left(t_{k_n}\right)g(z) \right\rvert + \left\lvert P\left(t_{k_n}\right)g(z) - \bk{g}{\mu} \right\rvert.
    \end{equation}
    By the assumed e-property in $\LL_{bs}(S)$, the first term on the right-hand side of \eqref{last} vanishes. The second one vanishes as well, by the definition of $\mu$. As a~consequence, referring to Lemma~\ref{lem:weak-b-bs}, we get  $\text{w}\,\text{-}\lim_{k\to\infty}\delta_{x_{n_k}}P(t_{n_k}) = \mu$, which contradicts \eqref{eq:thm-e-prop-eq}.
\end{proof}
\vspace{5mm}

{\bf Acknowledgement.} Both authors acknowledge the support of the project ``Near-term Quantum Computers: challenges, optimal implementations and applications'' under grant number POIR.04.04.00-00-17C1/18-00, which is carried out within the Team-Net programme of the
Foundation for Polish Science, co-financed by the European Union under the European Regional Development Fund. 

Both authors express their gratitude to the anonymous Reviewer for their careful reading of our manuscript and their insightful comments and suggestions, which allowed us to significantly improve our work.

H.W.-\'S. would also like to extend special thanks to the Mathematical Institute at Leiden University, where part of this work was performed, for their hospitality. In particular, thanks are addressed to Dr. Sander C. Hille for all inspiring discussions.

Additional thanks are due to prof. dr hab. Łukasz Stettner and the participants of his seminar on \textit{Stochastic Processes} at the Institute of Mathematics of the Polish Academy of Sciences in Warsaw for generously sharing their comprehensive knowledge of the investigated subject after the lecture by H.W.-\'S., promoting the results of this paper.

\bibliographystyle{amsplain}

\end{document}